\definecolor{darkgreen}{RGB}{0,150,0}
\definecolor{rust}{rgb}{0.6,0.1,0.1}
\newtheorem{theorem}{Theorem}[section]
\newtheorem{lemma}[theorem]{Lemma}
\newtheorem{proposition}[theorem]{Proposition}
\newtheorem{corollary}[theorem]{Corollary}
\newtheorem{definition}[theorem]{Definition}
\newtheorem{problem}[theorem]{Problem}
\theoremstyle{remark}
\newtheorem{remark}[theorem]{Remark}
\newtheorem{example}[theorem]{Example}
\DeclareMathOperator{\Id}{Id}
\DeclareMathOperator{\dom}{dom}
\DeclareMathOperator{\sign}{sign}
\DeclareMathOperator{\gra}{gra}
\DeclareMathOperator{\zer}{zer}
\DeclareMathOperator{\Fix}{Fix}
\DeclareMathOperator{\prox}{prox}
\DeclareMathOperator{\rank}{rank}
\newcommand{\Hilbert}{\mathcal{H}}
\newcommand{\Gilbert}{\mathcal{G}}
\newcommand{\R}{\mathbb{R}}
\newcommand{\Primal}{\mathcal{P}}
\newcommand{\Dual}{\mathcal{D}}
\newcommand{\setto}{\rightrightarrows}
\newcommand*\colvec[1]{\begin{pmatrix}#1\end{pmatrix}}
\newcommand*\barbf[1]{\bar{\mathbf{#1}}}
\title{A primal-dual splitting algorithm for composite monotone inclusions with minimal lifting}
\author{Francisco J.\ Arag\'on-Artacho\thanks{Department of Mathematics,
                             University of Alicante,
                             Alicante, \textsc{Spain}.
                                 Email:~\href{mailto:francisco.aragon@ua.es}
                                 {francisco.aragon@ua.es}}
        \and
       Radu I. Bo\c{t}\thanks{Faculty of Mathematics,
       			 University of Vienna,
       			 Vienna, \textsc{Austria}.
       			 Email:~\href{mailto:radu.bot@univie.ac.at}
       			 {radu.bot@univie.ac.at}}	
         \and
        David Torregrosa-Bel\'en\thanks{Department of Mathematics,
                             University of Alicante,
                             Alicante, \textsc{Spain}.
                                 Email:~\href{mailto:david.torregrosa@ua.es}
                                 {david.torregrosa@ua.es}}
 }
\begin{document}

\maketitle

\begin{abstract}
In this work, we study resolvent splitting algorithms for solving composite monotone inclusion problems. The objective of these general problems is finding a zero in the sum of maximally monotone operators composed with linear operators. Our main contribution is establishing the first primal-dual splitting algorithm for composite monotone inclusions with minimal lifting. Specifically, the proposed scheme reduces the dimension of the product space where the underlying fixed point operator is defined, in comparison to other algorithms, without requiring additional evaluations of the resolvent operators. We prove the convergence of this new algorithm and analyze its performance in a problem arising in image deblurring and denoising. This work also contributes to the theory of resolvent splitting algorithms by extending the minimal lifting theorem  recently proved by Malitsky and Tam to schemes with resolvent parameters.
\end{abstract}

\paragraph{Keywords} monotone operator · monotone inclusion · splitting algorithm · primal-dual algorithm · minimal lifting
\paragraph{MSC2020} 47H05 · 65K10 · 90C30

\section{Introduction}

In the last decades, monotone inclusion problems have become an attractive topic of research in operator theory and numerical optimization. The wide variety of situations in applied mathematics that can be modeled as finding a zero of the sum of mixtures of maximally monotone operators is one of the reasons for its increasing popularity. Among the methods that are usually employed for tackling these problems, \emph{splitting algorithms} (see, e.g.,~\cite[Chapter~26]{bauschke2017}) are the ones that have received more attention. Using simple operations, these methods define an iterative sequence which separately handles the operators defining the problem and is convergent to a solution to the inclusion problem. Further, as these methods only use first-order information, they are well suited for large-scale optimization problems.

In this work, we focus on the study of \emph{primal-dual splitting algorithms} for composite monotone inclusion problems in real Hilbert spaces of the following form.
\begin{problem}\label{problem:PD}
Let $\Hilbert$ and $(\Gilbert_j)_{1\leq j\leq m}$ be real Hilbert spaces. Let $A_1, \ldots, A_n:\Hilbert \setto \Hilbert$ be maximally monotone operators, let $B_j:\Gilbert_j\setto\Gilbert_j$ be maximally monotone and $L_j:\Hilbert \to\Gilbert_j$ be a bounded linear operator whose adjoint is denoted by $L_j^*$, for all $j\in{\{1,\ldots,m\}}$ . The problem consists in solving the  primal inclusion
\begin{equation}\label{eq:primalintroBj}
    \text{find } x \in \Hilbert \text{ such that } 0\in\sum_{i=1}^n A_i(x) + \sum_{j=1}^{m}L_j^*B_j (L_jx),
\end{equation}
together with its associated dual inclusion
\begin{equation}\label{eq:dualintroBj}
    \text{find } (u_1,\ldots,u_m)\in\Gilbert_1\times\cdots\times\Gilbert_m \text{ such that } \left(\exists\,x\in\Hilbert\right)
    \left\{
    \begin{aligned}
    & -\sum_{j=1}^{m} L_j^*u_j \in \sum_{i=1}^n A_i(x), & \\
   & u_j \in B_j(L_jx)   \quad \forall j\in{\{1,\ldots,m\}}.
    \end{aligned}
    \right.
\end{equation}
\end{problem}
Problem~\ref{problem:PD} encompasses numerous important problems in mathematical optimization and real-world applications, see e.g.~\cite{chambollelions, chambollepock,setzer2011}. In these settings, it is highly desirable to devise algorithms that simultaneously  obtain  solutions to both problems~\eqref{eq:primalintroBj} and~\eqref{eq:dualintroBj} --namely, a \emph{primal-dual solution}-- and which only make use of resolvents of the maximally monotone operators, forward evaluations of the linear operators and their adjoints, scalar multiplication and vector addition. Many splitting methods can be found in the literature satisfying these conditions, see e.g.~\cite{ bot2013, bot2015,botDR, combettespesquet,vu2013}. One of the best-known primal-dual algorithm is the one proposed by Brice\~{n}o-Arias and Combettes in~\cite{bricenocombettes2011}, which was further studied in~\cite{bot2016}. To derive this scheme, let us consider first the particular instance of Problem~\ref{problem:PD} in which $n=m=1$ and let us define the pair of operators $M$ and $N$ given by
\begin{equation*}\label{eq:fbf}
\left\{
\begin{aligned}
M:\Hilbert\times\Gilbert\setto\Hilbert\times\Gilbert :& (x,u) \to A(x) \times B^{-1}(u), \\
N:\Hilbert\times\Gilbert\to\Hilbert\times\Gilbert : &(x,u) \to (L^*u,-Lx).
\end{aligned}
\right.
\end{equation*}
The operator $M$ is maximally monotone and $N$ is a skew symmetric bounded linear operator. Further, the set of zeros of the sum $M+N$ consists of primal-dual solutions to Problem~\ref{problem:PD}. Applying the forward-backward-forward algorithm to the problem of finding the zeros of $M+N$ results in the fixed point iteration given by
\begin{equation}\label{eq:fbfit}
x^{k+1} = \big( J_{ \gamma  M} \left(\Id - \gamma N\right) +  \gamma N \left( \Id - J_{\gamma M}\left( \Id - \gamma N\right)\right)\big)(x^k) \quad \forall k \geq 0,
\end{equation}
where $\gamma>0$, $\Id$ denotes the identity operator and $J_{\gamma A}$ stands for the resolvent of $A$ with parameter $\gamma$ (see Definition~\ref{def:resolvent}). Thus, since the resolvent of a cartesian product is the cartesian product of the resolvents, it can be seen that~\eqref{eq:fbfit} is a \emph{full splitting algorithm}, as it only requires evaluations of the resolvents $J_{\gamma A}$ and $J_{\gamma B^{-1}}$, and of the linear operator and its adjoint.

The general problem involving more than two operators can be addressed by setting
\begin{equation*}
A :=A_1, B  := A_2\times\cdots\times A_n \times B_1\times\cdots\times B_m \text{ and } L := \Id  \times \stackrel{(n)}{\cdots} \times  \Id \times L_1 \times \cdots \times L_m.
\end{equation*}
In this case, according to~\eqref{eq:fbfit}, the resulting algorithm is generated by a fixed point iteration of an operator defined in the ambient space $\Hilbert^{n} \times \Gilbert_1 \times \cdots \times \Gilbert_m$.
The dimension of the underlying space is directly related to the memory requirements of the resulting algorithm. In general, a smaller dimension of the space translates into less consumption of computational resources. For this reason, the development of  algorithms with reduced dimension for solving monotone inclusion problems has recently become an active topic of research~\cite{Campoy21,malitsky2021resolvent,TamEtAl21,ryu2020}.

\paragraph{Lifted splitting algorithms} The notion of \emph{lifted splitting}, first introduced in~\cite{ryu2020},  relates a fixed point algorithm with the dimension of its underlying ambient space. Consider the simplest case of  the classical monotone inclusion problem obtained by setting $m=0$ in~\eqref{eq:primalintroBj}:
\begin{problem}\label{eq:monotoneinclusion}
 Let  $A_1,\ldots,A_n:\Hilbert\setto\Hilbert$ be maximally monotone operators and consider the problem
 \begin{equation*}
\text{ find } x\in\Hilbert \text{ such that } 0\in\sum_{i=1}^n A_i(x).
\end{equation*}
\end{problem}
A fixed point algorithm for finding a solution to Problem~\ref{eq:monotoneinclusion} employs a $d$-\emph{fold lifting} if its underlying fixed point operator can be defined on the $d$-fold Cartesian product $\Hilbert^{d}$. For example, if $n=2$, the famous Douglas--Rachford algorithm~\cite{lionsmercier} makes use of a 1-fold lifting, since it can be written as the fixed point iteration
\begin{equation*}
x^{k+1} = x^k + \lambda \left( J_{A_2}\left( 2J_{A_1} -\Id\right) -J_{A_1} \right)(x^k)  \quad \forall k \geq 0,
\end{equation*}
with $\lambda\in{]0,2[}$. Until very recently, the only way to tackle the problem when $n>2$ was using Pierra's product space reformulation~\cite{Pierra}, which implies an $n$-fold lifting. Nowadays, various algorithms have been proposed
allowing to solve the problem by only resorting to an $(n-1)$-lifting, see e.g.~\cite{Campoy21,TamEtAl21}. This reduction from $n$ to $n-1$ has been proven to be minimal~\cite{malitsky2021resolvent}
when the algorithms are required to be~\emph{frugal resolvent splittings}~\cite{ryu2020}, which means that
each of the resolvents $J_{A_1}, \ldots, J_{A_n}$ is evaluated only once per iteration. 

To the best of the authors' knowledge, the notion of lifting has not been developed in the setting of primal-dual inclusions given by Problem~\ref{problem:PD}. We will say that a primal-dual splitting has \emph{$(d,f)$-lifting} if the underlying fixed point operator can be written in the product space
\begin{equation*}
 \Hilbert^{d} \times \Gilbert_{1}^{f_1} \times \cdots \times \Gilbert_{m}^{f_m},
\end{equation*}
with $f = \sum_{j=1}^m f_j$. Thus, the Briceño-Arias--Combettes primal-dual splitting algorithm makes use of an $(n,m)$-fold lifting. This is also the case for the other primal-dual algorithms existing in the literature. In this work, we propose the first $(n-1,m)$-lifted splitting method for solving primal-dual inclusions and demonstrate the minimality of the algorithm. In order to do this, it is important to note the definition of frugal resolvent splitting does not allow the use of parametrized resolvents. The inclusion  of these resolvent parameters is of crucial importance for controlling the Lipschitz constants of the linear operators in~Problem~\ref{problem:PD}, as can be seen in all the existent primal-dual schemes. This motivates the introduction of the concept of \emph{frugal parametrized resolvent splitting} whose definition coincides with the one of frugal resolvent splitting except that it permits the inclusion of resolvent parameters. Our contribution to the theory of minimal lifting splitting methods is double: (i) we extend the results of Malitsky--Tam in~\cite[Section~3]{malitsky2021resolvent} to frugal parametrized resolvent splitting algorithms, (ii) we prove that for a frugal primal-dual parametrized resolvent splitting (see Section~\ref{sect:minimal} for a precise definition) with $(d,m)$-fold lifting to solve Problem~\ref{problem:PD}, one necessarily has $d\geq n-1$. Our proposed algorithm is the first\footnote{The method recently proposed by Brice\~{n}o-Arias in~\cite{Briceno21}, which differs from ours in the last component of the vector $\mathbf{x}$, is not correct. For instance, if, in the setting of Problem 1.1 in~\cite{Briceno21}, ${\cal H} = {\cal G} = \R^2$, $A_1=A_3=0$, $A_2=\Id-(1,1)^T$ and $L=\begin{psmallmatrix}1 & 0\\ 0 & 0\end{psmallmatrix}$, then $(z, u)=(1,1/2,1,0)^T$ is a fixed point of the underlying operator with $\delta=1$ and $\gamma >0$, but $x=J_{A_1}(z)=z = (1,1/2)^T$ is not a zero of the sum, since $A_1(x)+A_2(x)+L^*A_3(Lx)=(0,-1/2)^T$.
} algorithm in the literature being minimal according to this relation.

The rest of this work is structured as follows. In Section~\ref{sect:2} we recall some preliminary notions and results. In particular, in Section~\ref{subsect:resolventsplitting} we present the extension of the results by Malitsky--Tam~\cite{malitsky2021resolvent} to parametrized resolvent splitting algorithms. In Section~\ref{sect:alg}, we introduce the first primal-dual algorithm with reduced lifting for composite monotone inclusion problems and prove its convergence. The concept of parametrized resolvent splitting is adapted to primal-dual schemes in Section~\ref{sect:minimal}. We prove a minimality theorem under the hypothesis of frugality and show that our proposed algorithm verifies it. In Section~\ref{sect:numerics} we include a numerical experiment on image deblurring and compare the performance of the new algorithm with the best performing primal-dual algorithm for this problem. The paper ends with some conclusions and possible future work directions in Section~\ref{sect:conclusions}. Finally, in Appendix~\ref{sect:appendix}, a detailed proof of the results in Section~\ref{subsect:resolventsplitting} is presented.

\section{Preliminaries}\label{sect:2}

Throughout this paper, $\Hilbert$, $\Gilbert$ and $(\Gilbert_j)_{1 \leq j \leq m}$ are real Hilbert spaces. Otherwise stated, to simplify the notation we will employ  $\langle \cdot,\cdot\rangle$ and $\|\cdot\|$ to denote the inner product and the induced norm, respectively, of any space. We use $\to$ to denote norm convergence of a sequence. We denote by $\Hilbert^n$ the product Hilbert space  $\Hilbert^n=\Hilbert \times \stackrel{(n)}{\cdots} \times \Hilbert$  with inner product defined as
\begin{equation*}
\langle (x_1,\ldots,x_n), (\bar{x}_1,\ldots,\bar{x}_n)\rangle := \sum_{i=1}^n \langle x_i,\bar{x}_i\rangle\quad\forall (x_1,\ldots,x_n), (\bar{x}_1,\ldots,\bar{x}_n)\in\Hilbert^n.
\end{equation*}
Sequences and sets in product spaces are marked with bold, e.g., $\mathbf{x}=(x_1,\ldots,x_n)\in\Hilbert^n$.

For a \emph{set-valued operator}, we write $A:\Hilbert \setto\Hilbert$, in opposite to $A:\Hilbert\to\Hilbert$ which denotes a \emph{single-valued operator}. The  notation $\dom$, $\Fix$, $\zer$ and $\gra$ is used for the \emph{domain}, the \emph{set of fixed points}, the \emph{zeros}  and the \emph{graph} of $A$, respectively, i.e.,
\begin{align*}
\dom A&:=\left\{x\in\Hilbert : A(x)\neq\emptyset\right\}, &
\Fix A&:=\left\{x\in\Hilbert : x\in A(x)\right\}, \\
\zer A&:=\left\{x\in\Hilbert : 0\in A(x)\right\}, & \gra A&:=\left\{(x,u)\in\Hilbert\times\Hilbert : u\in A(x)\right\} .
\end{align*}
The inverse operator of $A$, denoted by $A^{-1}$, is the operator whose graph is given by $\gra A^{-1} = \left\{ (u,x)\in\Hilbert\times\Hilbert : u\in A(x)\right\}$. The identity operator is denoted by $\Id$. When $L:\Hilbert\to\Gilbert$ is a bounded linear operator, we use $L^*:\Gilbert\to\Hilbert$ to denote its \emph{adjoint}, which is the unique bounded linear operator such that $\langle Lx,y\rangle = \langle x, L^*y\rangle$, for all $x\in\Hilbert$ and $y\in\Gilbert$.

To simplify the notation, we will use $\llbracket k,l\rrbracket$ to denote the set of integers between $k,l\in\mathbb{N}$, i.e.,
\begin{equation*}
\llbracket k, l\rrbracket := \left\{
\begin{array}{ll}
\{k, k+1, \ldots ,l\} & \text{if } k\leq l, \\
\emptyset & \text{otherwise}.
\end{array}
\right.
\end{equation*}
\begin{definition}\label{def:cocoercive}
An operator $T:\Hilbert \to \Hilbert$ is said to be
\begin{enumerate}[(i)]
\item\emph{$\kappa$-Lipschitz continuous} for $\kappa >0$ if
\begin{equation*}
\|T(x)-T(y)\| \leq \kappa \|x-y\| \quad \forall x,y \in \Hilbert;
\end{equation*}
\item\label{defT:ii}\emph{nonexpansive} if it is $1$-Lipschitz continuous, i.e.,
\begin{equation*}
    \|T(x)-T(y)\| \leq \|x-y\| \quad \forall x,y \in \Hilbert;
\end{equation*}
\item\label{defT:iii}$\alpha$-\emph{averaged nonexpansive} for $\alpha\in{]0,1[}$ if
\begin{equation*}
\|T(x)-T(y)\|^2+\frac{1-\alpha}{\alpha} \|(\Id-T)(x)-(\Id-T)(y)\|^2 \leq \|x-y\|^2 \quad \forall x,y\in\Hilbert.
\end{equation*}
\end{enumerate}
\end{definition}
\begin{definition}
A set-valued operator $A:\Hilbert \setto \Hilbert$ is monotone if
\begin{equation*}
    \langle x-y,u-v\rangle \geq 0 \quad \forall (x,u),(y,v)\in\gra{A}.
\end{equation*}
Furthermore, $A$ is said to be maximally monotone if there exists no monotone operator $B:\Hilbert \setto \Hilbert$ such that $\gra{B}$ properly contains $\gra{A}$.
\end{definition}
\begin{definition}\label{def:resolvent}
Given an operator $A\colon\Hilbert\setto\Hilbert$, the \emph{resolvent} of $A$ with parameter $\gamma>0$ is the operator $J_{\gamma A}\colon\Hilbert\setto\Hilbert$ defined by $J_{\gamma A}:=(\Id+\gamma A)^{-1}$.
\end{definition}

The next result contains Minty's theorem~\cite{Minty1962}.
\begin{proposition}[{\cite[Corollary~23.11]{bauschke2017}}]
Let $A:\Hilbert\setto\Hilbert$ be monotone and let $\gamma>0$. Then,
\begin{enumerate}[(i)]
\item $J_{\gamma A}$ is single-valued,
\item $\dom J_{\gamma A}=\Hilbert$ if and only if $A$ is maximally monotone.
\end{enumerate}
\end{proposition}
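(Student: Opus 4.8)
The plan is to handle the two items separately and, within (ii), to split into the two implications: single-valuedness and the ``only if'' direction are elementary consequences of monotonicity alone, whereas the ``if'' direction is Minty's surjectivity theorem and carries essentially all the difficulty. Throughout I will use that $\dom J_{\gamma A}=\ran(\Id+\gamma A)$, which is immediate from $J_{\gamma A}=(\Id+\gamma A)^{-1}$.

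For (i), I would argue directly. Suppose $x_1,x_2\in J_{\gamma A}(z)$, which by definition means $z\in x_i+\gamma A(x_i)$, i.e. $\gamma^{-1}(z-x_i)\in A(x_i)$ for $i=1,2$. Feeding the pairs $(x_1,\gamma^{-1}(z-x_1))$ and $(x_2,\gamma^{-1}(z-x_2))$ into the monotonicity inequality for $A$ gives $\gamma^{-1}\langle x_1-x_2,(z-x_1)-(z-x_2)\rangle\geq 0$, which simplifies to $-\gamma^{-1}\|x_1-x_2\|^2\geq 0$; since $\gamma>0$ this forces $x_1=x_2$. This step uses only monotonicity, matching the hypothesis of (i).

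For the ``only if'' direction of (ii), I would show that $\dom J_{\gamma A}=\Hilbert$, i.e. $\ran(\Id+\gamma A)=\Hilbert$, forces maximality by contradiction. If $A$ were not maximal, there would be a pair $(x_0,u_0)\notin\gra A$ monotonically related to every element of $\gra A$. By surjectivity, $x_0+\gamma u_0\in\ran(\Id+\gamma A)$, so there is $(x_1,u_1)\in\gra A$ with $x_1+\gamma u_1=x_0+\gamma u_0$. Applying the monotone-relatedness to $(x_1,u_1)$, together with the resulting identity $x_0-x_1=-\gamma(u_0-u_1)$, yields $-\gamma\|u_0-u_1\|^2\geq 0$, whence $(x_0,u_0)=(x_1,u_1)\in\gra A$, a contradiction.

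The hard part is the ``if'' direction: maximal monotonicity of $A$ implies $\ran(\Id+\gamma A)=\Hilbert$. I would first reduce to $\gamma=1$ (since $\gamma A$ is maximal monotone whenever $A$ is), and then prove that $C:=\ran(\Id+A)$ is simultaneously nonempty, closed and open, so that connectedness of $\Hilbert$ forces $C=\Hilbert$. Nonemptiness follows since $\dom A\neq\emptyset$. Closedness is where the resolvent structure pays off: on $C$ the same computation as in (i) gives $\langle J_A z-J_A z',z-z'\rangle\geq\|J_A z-J_A z'\|^2$, hence $J_A$ is nonexpansive on $C$; thus if $z_n\to z$ with $z_n\in C$ then $(J_A z_n)$ is Cauchy, converges to some $p$, and the graph of $A$ (closed by maximality, via passing to the limit in the monotonicity inequality) contains $(p,z-p)$, giving $z\in C$. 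The genuine obstacle is openness, i.e. local surjectivity around a point of $C$: the nonexpansive estimates no longer suffice, and one must invoke a true fixed-point or perturbation argument (equivalently the convex-analytic machinery of the Fitzpatrick function, or a Banach-fixed-point construction after translating a known solution to the origin). This openness is precisely the content of Minty's theorem, and I would either reproduce that fixed-point argument or simply cite it, as the excerpt does via~\cite[Corollary~23.11]{bauschke2017}.
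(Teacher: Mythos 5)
The paper does not prove this statement at all: it is quoted as a preliminary with the citation \cite[Corollary~23.11]{bauschke2017}, so there is no in-paper argument to compare against. Judged on its own terms, your treatment of (i) and of the ``only if'' half of (ii) is correct and complete: both reduce, as you say, to feeding two suitably chosen graph points into the monotonicity inequality and exploiting the identity $x_1-x_2=-\gamma(u_1-u_2)$ to produce a term $-\gamma^{-1}\|x_1-x_2\|^2\geq 0$ (resp.\ $-\gamma\|u_0-u_1\|^2\geq 0$). The reduction $\dom J_{\gamma A}=\ran(\Id+\gamma A)$ and the replacement of $\gamma A$ by $A$ are also fine, since $\gamma A$ is (maximally) monotone exactly when $A$ is.

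The genuine gap is the one you already flag: in the ``if'' direction, the openness of $C=\ran(\Id+A)$ is not an auxiliary step but is the entire content of Minty's theorem, and the open--closed--connected scaffolding does not reduce its difficulty. There is no elementary local-surjectivity argument available at that point --- any proof of openness around a point of $C$ is, after translation, already a proof of global surjectivity, so the connectedness framing buys nothing. (Your closedness argument is correct, but note it uses maximality twice: once for nonexpansiveness of $J_A$ on $C$ via firm nonexpansiveness, and once to place the limit pair $(p,z-p)$ in $\gra A$ by monotone relatedness.) A complete proof must at this point invoke genuinely nonelementary input --- the Kirszbraun--Valentine extension theorem applied to the Cayley transform, the Debrunner--Flor lemma, or the Fenchel-duality/Fitzpatrick-function argument used in the cited reference. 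Since the paper itself defers the whole proposition to \cite{bauschke2017}, your decision to cite that step is consistent with the paper's treatment, but you should state clearly that the surjectivity direction is being imported rather than suggesting the topological argument could be completed by hand.
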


\subsection{Parametrized resolvent splitting}\label{subsect:resolventsplitting}
Besides developing lifted splitting algorithms with reduced dimension, different works have been devoted to determine the minimal dimension reduction that can be achieved under some conditions. This is the case of~\cite{malitsky2021resolvent,ryu2020}, where a minimality result is obtained for the classical monotone inclusion Problem~\ref{eq:monotoneinclusion}.
In what follows, we employ $T$ for denoting a \emph{fixed point operator} and $S$ for a \emph{solution operator}, both depending on the maximally monotone operators appearing in the problem.
\begin{definition}[Fixed point encoding~\cite{ryu2020}]
A pair of operators $(T,S)$ is a \emph{fixed point encoding} for Problem~\ref{eq:monotoneinclusion} if, for all particular instance of the problem,
\begin{equation*}
\Fix{T}\neq \emptyset \Longleftrightarrow \zer{\left(\sum_{i=1}^n A_i\right)}\neq\emptyset \text{ and } \mathbf{z}\in\Fix{T} \Longrightarrow S(\mathbf{z}) \in\zer{\left(\sum_{i=1}^n A_i\right)}.
\end{equation*}
\end{definition}
Previous works on minimality are based on the concept of \emph{resolvent splitting}, which does not allow employing parametrized resolvents (i.e., it only permits computation of the resolvents $J_{A_1},\ldots,J_{A_n}$). In this work, we introduce the notion of \emph{parametrized resolvent splitting} and adapt the minimality result in~\cite[Section~3]{malitsky2021resolvent} to the more general parametrized setting. Since the reasoning is very similar to the one  in the mentioned reference, we only present the results here and refer the interested reader to Appendix~\ref{sect:appendix} for a detailed demonstration.

\begin{definition}[Parametrized resolvent splitting]
A fixed point encoding $(T,S)$ for Problem~\ref{eq:monotoneinclusion} is a \emph{parametrized resolvent splitting} if, for all particular instances of the problem, there is a finite procedure that evaluates $T$ and $S$ at a given point which only uses vector addition, scalar multiplication, and the parametrized resolvents of $A_1,\ldots,A_n$.
\end{definition}
\begin{definition}[Frugality]
A parametrized resolvent splitting $(T,S)$ for Problem~\ref{eq:monotoneinclusion} is \emph{frugal} if, in addition, each of the parametrized resolvents of $A_1,\ldots,A_n$ is used exactly once.
\end{definition}
\begin{definition}[Lifting~\cite{ryu2020}]
Let $d\in\mathbb{N}$. A fixed point encoding $(T,S)$ is a $d$-fold lifting for Problem~\ref{eq:monotoneinclusion} if $T:\Hilbert^d\to\Hilbert^d$ and $S:\Hilbert^d\to\Hilbert$.
\end{definition}
\begin{example}
In~\cite{Campoy21}, a product space reformulation with reduced dimension  is proposed, which applied to Problem~\ref{eq:monotoneinclusion} yields the following lifted splitting. Given any $\gamma > 0$ and $\lambda\in{]0,2]}$, the algorithm in~\cite[Theorem~5.1]{Campoy21} can be defined by the operator $R:\Hilbert^{n-1}\to\Hilbert^{n-1}$ given by
\begin{equation*}
R(\mathbf{z}) := \mathbf{z} + \lambda\colvec{x_1 - x_0 \\ x_2 - x_0 \\ \vdots \\ x_{n-1} - x_0} ,
\end{equation*}
where $\mathbf{z} = (z_0,z_1,\ldots,z_{n-1})$ and $\mathbf{x} = (x_0,x_1,\ldots,x_{n-1})\in\Hilbert^{n}$ is the vector defined as
\begin{equation*}
\left\{
\begin{aligned}
x_0 & =  J_{\frac{\gamma}{n-1} A_n} \left( \frac{1}{n-1} \sum_{i=1}^{n-1} z_i\right),  \\
x_i & =  J_{\gamma A_i} (2 x_0 - z_i) \quad \forall i\in{\llbracket 1,n-1\rrbracket}.
\end{aligned}
\right.
\end{equation*}
Moreover, if we let $S:\Hilbert^{n-1} \to \Hilbert$ be the operator given by
\begin{equation*}
S(\mathbf{z}) := J_{\frac{\gamma}{n-1} A_{n}} \left( \frac{1}{n-1} \sum_{i=1}^{n-1} z_i\right),
\end{equation*}
then the pair $(R,S)$ is a frugal parametrized resolvent splitting with $(n-1)$-fold lifting which is not a resolvent splitting, since it makes use of resolvent parameters.
\end{example}
Malitsky and  Tam prove in~\cite [Theorem~3.3]{malitsky2021resolvent} that the minimal lifting that one can achieve for Problem~\ref{eq:monotoneinclusion} with frugal resolvent splittings  is $n-1$. From their proof, it cannot be directly determined whether the same result holds when the resolvents are allowed to have different parameters. The next theorem provides an affirmative answer to this question.
\begin{theorem}[Minimal lifting for frugal parametrized splittings]\label{th:minimallifting}
Let $n\geq 2$ and let $(T,S)$ be a frugal parametrized resolvent splitting with $d$-fold lifting for Problem~\ref{eq:monotoneinclusion}. Then, $d \geq n-1$.
\end{theorem}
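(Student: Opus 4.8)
The plan is to follow the strategy of Malitsky and Tam, but to carry the resolvent parameters $\gamma_1,\dots,\gamma_n>0$ through every step and check that they never lower the required dimension; the technical details would go to Appendix~\ref{sect:appendix}. First I would put the splitting into a canonical algebraic form. Since the procedure defining $(T,S)$ uses only vector addition, scalar multiplication, and each parametrized resolvent exactly once, it has a well-defined evaluation order, and after relabelling the operators I may assume the resolvents are evaluated as $1,\dots,n$. The key structural fact (the space-independence lemma proved in the appendix, as for frugal resolvent splittings) is that all combinations use \emph{scalar} coefficients independent of $\Hilbert$. Thus there are reals $m_{ij}$ (for $j<i$), $n_{ik}$, and coefficients for $T$ and $S$, such that on input $\mathbf{z}=(z_1,\dots,z_d)$ one computes $x_i=J_{\gamma_i A_i}(y_i)$ with $y_i=\sum_{j<i}m_{ij}x_j+\sum_{k=1}^d n_{ik}z_k$, and $T(\mathbf{z}),S(\mathbf{z})$ are fixed linear combinations of $x_1,\dots,x_n,z_1,\dots,z_d$. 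Writing $u_i:=\gamma_i^{-1}(y_i-x_i)\in A_i(x_i)$, the equivalent relation is $y_i=x_i+\gamma_i u_i$, and the matrix $(m_{ij})$ is strictly lower triangular by the evaluation order.

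Second, I would prove a \emph{consensus lemma}: for every instance and every $\mathbf{z}\in\Fix T$ one has $x_1=\dots=x_n=S(\mathbf{z})$ and $\sum_{i=1}^n u_i=0$. The mechanism is that the computation of $T(\mathbf{z})$ sees each $A_i$ only through the single graph pair $(x_i,u_i)$, so I may replace $A_i$ by any maximally monotone operator through that pair without changing $\mathbf{z}\in\Fix T$ or $S(\mathbf{z})$. Taking for $A_i$ the maximally monotone operator with $\dom A_i=\{x_i\}$ and $A_i(x_i)=\Hilbert$, for which $J_{\gamma_i A_i}(y)=x_i$ for all $y$, makes $\dom(\sum_i A_i)=\bigcap_i\{x_i\}$; the encoding forces this set to be nonempty, so all $x_i$ coincide and $S(\mathbf{z})$ equals their common value. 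Taking instead the constant operators $A_i\equiv u_i$, for which $J_{\gamma_i A_i}(y_i)=y_i-\gamma_i u_i=x_i$, makes $\zer(\sum_i A_i)$ nonempty iff $\sum_i u_i=0$, which the encoding again forces. The parameters $\gamma_i$ enter only through the translations $y_i-\gamma_i u_i$ and cause no difficulty.

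Third comes the dimension count. Given any target $(x,u_1,\dots,u_n)$ with $\sum_i u_i=0$, I would use the single-valued family $A_i:=\Id-a_i$ with $a_i:=x-u_i$, whose unique zero is $\tfrac1n\sum_i a_i=x$; by the encoding a fixed point $\mathbf{z}$ exists, and by the consensus lemma together with single-valuedness it realizes exactly this target, so $x_i=x$ and the $i$-th certificate equals $u_i$. Substituting $x_j=x$ into $y_i=x_i+\gamma_i u_i$ gives $\sum_k n_{ik}z_k=\gamma_i u_i+x(1-\sigma_i)$ with $\sigma_i:=\sum_{j<i}m_{ij}$, so the vector $b(x,u):=\big(\gamma_i u_i+x(1-\sigma_i)\big)_i$ lies in $\ran\Lambda$, where $\Lambda:=(n_{ik})\colon\R^d\to\R^n$. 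As $(x,u)$ ranges over the $n$-dimensional space $\{\sum_i u_i=0\}$, the linear map $(x,u)\mapsto b(x,u)=x\,c+Du$, with $c:=(1-\sigma_i)_i$ and $D\colon(w_i)_i\mapsto(\gamma_i w_i)_i$, has image $\spann\{c\}+\{w:\sum_i\gamma_i^{-1}w_i=0\}$, of dimension at least $n-1$ (it is the displayed hyperplane when $c$ lies in it, and all of $\R^n$ otherwise). Since this image is contained in $\ran\Lambda$ and $\dim\ran\Lambda\le d$, I conclude $d\ge n-1$.

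The main obstacle is the consensus lemma, and conceptually the verification that the parameters cannot help. The naive reduction $\tilde A_i:=\gamma_i A_i$ fails, because $\zer(\sum_i\tilde A_i)\neq\zer(\sum_i A_i)$ in general, so the Malitsky--Tam theorem cannot be invoked as a black box and the count must be redone with the $\gamma_i$ present. The crux is that the $\gamma_i$ appear in the final argument only through the invertible positive diagonal map $D$, which changes neither the dimension of the relevant image nor the monotonicity inequalities underlying the consensus lemma; hence the bound $n-1$ is unaffected. Keeping track of the $\gamma_i$ while reproducing the space-independence lemma and the consensus argument is the bulk of the work, which I would carry out in Appendix~\ref{sect:appendix}.
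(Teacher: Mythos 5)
Your proof is correct and follows the same overall skeleton as the paper's (which is itself the Malitsky--Tam blueprint carried through with the parameters $\delta_i$): canonical algebraic form with scalar coefficients and a strictly lower-triangular feedback matrix, a consensus step forcing $x_1=\cdots=x_n$ at fixed points, and a rank bound on an $n\times d$ (or $d\times n$) coefficient matrix. The two key steps are, however, executed differently. For consensus, the paper (Proposition~\ref{prop:solutionmapping}) perturbs a constant instance by the $n+1$ operators $A^{(j)}=\barbf{a}+(0,\ldots,x_j-\bar{x}_j,\ldots,0)$ and reads off $x^*=\bar{x}_j$ from $\zer(\sum_i A_i^{(j)})$; you instead substitute the ``vertical'' operator with $\gra A_i=\{x_i\}\times\Hilbert$ (forcing $\bigcap_i\{x_i\}\neq\emptyset$) and the constant operators $A_i\equiv u_i$ (forcing $\sum_i u_i=0$). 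Both substitutions leave the computation of $T(\mathbf{z})$ and $S(\mathbf{z})$ untouched because frugality means each $A_i$ is seen only through the pair $(x_i,u_i)$, so the two arguments are interchangeable. For the dimension count, the paper argues by contradiction on the matrix $T_x\in\R^{d\times n}$: if $d\leq n-2$ then $\ker T_x$ meets the complement of the diagonal $\Delta_n$, and an auxiliary constant instance realizes a non-diagonal $\barbf{x}$ at a fixed point, contradicting consensus. You argue directly on $Y_z\in\R^{n\times d}$: the affine single-valued instances $A_i=\Id-(x-u_i)$ force the vectors $\bigl(x(1-\sigma_i)+\gamma_i u_i\bigr)_i$ into $\ran Y_z$, and since these sweep out a subspace of dimension at least $n-1$ (the diagonal matrix $D=\mathrm{diag}(\gamma_i)$ being invertible, the parameters indeed cannot lower this), $d\geq n-1$ follows without contradiction. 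Your route is slightly more direct and makes explicit where the resolvent parameters enter (only through $D$); the paper's route has the advantage of isolating the reusable kernel characterization (Lemma~\ref{lemma:M}) and an explicit formula for $S_A$. The only point to make explicit when writing this up is the standard reduction to scalar coordinates (restricting the targets $(x,u)$ to a one-dimensional subspace $\R e\subseteq\Hilbert$ so that $\ran Y_z$ genuinely has dimension at most $d$), which both proofs implicitly rely on.
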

\begin{proof}
See Theorem~\ref{th:minimalliftingappendix} in Appendix~\ref{sect:appendix}.
\end{proof}


\section{A primal-dual splitting with minimal lifting }\label{sect:alg}
In this section we devise a primal-dual splitting algorithm for Problem~\ref{problem:PD} with minimal lifting.
We base our analysis in the case in which the primal problem involves only one linear composition, i.e. $m=1$, and later extend to an arbitrary finite number of  linearly composed maximally monotone operators by appealing to a product space reformulation.

Let $n\geq2$. We start by considering the primal-dual problem given by
\begin{equation}\label{eq:primal}
    \text{find } x \in \Hilbert \text{ such that } 0\in\sum_{i=1}^n A_i(x) + L^* B (Lx),
\end{equation}
and
\begin{equation}\label{eq:dual}
    \text{find } u\in\Gilbert \text{ such that } 0\in -L \left( \sum_{i=1}^n A_i \right)^{-1} \bigl(- L^* u\bigr) + B^{-1} (u),
\end{equation}
where $A_1,\ldots,A_n:\Hilbert\setto\Hilbert$  and  $B:\Gilbert \setto \Gilbert$ are maximally monotone operators  and $L:\Hilbert\to\Gilbert$ is a bounded linear operator.  Note that in this case~\eqref{eq:dual} corresponds to the Attouch--Th\'era dual problem of~\eqref{eq:primal}, see~\cite{attouch1996general}.
In the following, we denote the set of solutions of~\eqref{eq:primal} and~\eqref{eq:dual} by $\mathcal{P}$ and $\mathcal{D}$, respectively, and consider the set $\mathbf{Z}$ defined as
\begin{equation*}
    \mathbf{Z} := \left\{ (x,u)\in \Hilbert \times \Gilbert : -L^*u \in\sum_{i=1}^n A_i(x) \text{ and } u \in B(Lx)\right\},
\end{equation*}
which is useful for tackling primal-dual inclusion problems.
It is well-known that $\mathbf{Z}$ is a subset of $\Primal \times \Dual$ and that
\begin{equation*}
    \Primal \neq \emptyset  \Longleftrightarrow \mathbf{Z} \neq \emptyset \Longleftrightarrow  \Dual \neq \emptyset.
\end{equation*}
Indeed, we have
\begin{equation*}
    \begin{aligned}
        \exists\, x\in\mathcal{P} &  \Longleftrightarrow (\exists\, x\in\Hilbert)\quad 0\in\sum_{i=1}^n A_i(x)+L^*B(Lx)  \\ & \Longleftrightarrow
        (\exists\, (x,u) \in\Hilbert \times \Gilbert) \; \left\{
        \begin{aligned}
        -L^*(u) & \in \sum_{i=1}^n A_i(x) \\
        u & \in B(Lx)
        \end{aligned}
        \right.\\
          &  \Longleftrightarrow
         (\exists\, (x,u) \in\Hilbert \times \Gilbert) \; \left\{
        \begin{aligned}
        x &\in \left(\sum_{i=1}^n  A_i\right)^{-1}\bigl(-L^*u\bigr) \\
        Lx  &\in B^{-1} (u)
        \end{aligned}
        \right.  \\ &\Longleftrightarrow
        (\exists\, u\in\Gilbert) \quad 0\in -L \left( \sum_{i=1}^n A_i \right)^{-1} \bigl(- L^* u\bigr) + B^{-1} (u) \Longleftrightarrow \exists\, u \in\mathcal{D}.
    \end{aligned}
\end{equation*}
We refer to an element of $\mathbf{Z}$ as a \emph{primal-dual solution} of~\eqref{eq:primal}-\eqref{eq:dual}.

Now, we introduce a fixed point algorithm for solving the primal-dual problem given by~\eqref{eq:primal}-\eqref{eq:dual}. Let $\lambda,\gamma >0$ and let $T:\Hilbert^{n-1} \times \Gilbert \to \Hilbert^{n-1} \times \Gilbert$ be the operator given by
\begin{equation}\label{eq:defT}
    T\colvec{\mathbf{z}\\v}: = \colvec{\mathbf{z}\\v} + \lambda \colvec{x_2 - x_1\\ x_3- x_2\\ \vdots\\ x_n-x_{n-1}\\ \gamma(y-Lx_n)},
\end{equation}
where $(\mathbf{x},y)=(x_1,\ldots,x_n,y) \in \Hilbert^n\times \Gilbert$ depends on $(\mathbf{z},v) = (z_1,\ldots,z_{n-1},v) \in\Hilbert^{n-1} \times \Gilbert$ in the following way
\begin{equation}\label{eq:xy}
    \left\{
    \begin{aligned}
        x_1 &= J_{A_1}(z_1), \\
        x_i &= J_{A_i}(z_i+x_{i-1}-z_{i-1}), \quad \forall i\in{\llbracket 2,n-1\rrbracket},  \\
        x_n &=  J_{A_n}(x_1+x_{n-1} -z_{n-1}- L^* (\gamma Lx_1 - v)), \\
        y & =  J_{B/\gamma} \left(L(x_1 + x_{n})- \frac{v}{\gamma}\right).
    \end{aligned}
    \right.
\end{equation}

In the next lemma we characterize the set of fixed points of the operator $T$ by means of the set of primal-dual solutions to~\eqref{eq:primal}-\eqref{eq:dual}.\newpage

\begin{lemma}\label{lema:ZFix} Let $n\geq 2$ and $\lambda, \,\gamma > 0$. The following assertions hold.
\begin{enumerate}[(i)]
    \item\label{it:ZFix-i} If $(\bar{x},\bar{u})\in\mathbf{Z}$, then there exists $\barbf{z}\in\Hilbert^{n-1}$  such that  $(\barbf{z},\gamma L\bar{x}-\bar{u})\in\Fix{T}$.
    \item\label{it:ZFix-ii} If $(\bar{z}_1,\ldots,\bar{z}_{n-1},\bar{v})\in\Fix{T}$, then $(J_{A_1}(\bar{z}_1),\gamma L\bar{x}-\bar{v})\in\mathbf{Z}$.
\end{enumerate}
As a result,
\begin{equation*}
    \Fix{T} \neq \emptyset \Longleftrightarrow \mathbf{Z} \neq \emptyset.
\end{equation*}
\end{lemma}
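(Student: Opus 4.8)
The plan is to translate both the fixed-point condition $T(\mathbf{z},v)=(\mathbf{z},v)$ and the membership $(x,u)\in\mathbf{Z}$ into one and the same system of inclusions, so that parts (i) and (ii) become the two directions of a single equivalence. Since $\lambda>0$, a point $(\barbf{z},\bar{v})$ lies in $\Fix T$ if and only if the bracketed increment in~\eqref{eq:defT} vanishes, that is, $x_1=x_2=\cdots=x_n$ and $y=Lx_n$. Writing $\bar{x}$ for this common value, and using the resolvent identities $J_A(z)=x\Leftrightarrow z-x\in A(x)$ and $J_{B/\gamma}(w)=y\Leftrightarrow\gamma(w-y)\in B(y)$ (single-valuedness being guaranteed by Minty's theorem), I would first rewrite each line of~\eqref{eq:xy} as an inclusion.

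For part (ii), I would substitute the common value $\bar{x}$ into these inclusions. The first $n-1$ resolvents give $\bar{z}_1-\bar{x}\in A_1(\bar{x})$ and $\bar{z}_i-\bar{z}_{i-1}\in A_i(\bar{x})$ for $i\in\llbracket 2,n-1\rrbracket$, while the $n$-th gives $\bar{x}-\bar{z}_{n-1}-L^*(\gamma L\bar{x}-\bar{v})\in A_n(\bar{x})$. Summing these $n$ memberships, the $\bar{z}_i$-terms telescope and the two $\bar{x}$-terms cancel, leaving precisely $-L^*(\gamma L\bar{x}-\bar{v})\in\sum_{i=1}^n A_i(\bar{x})$. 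The $y$-line, with $y=L\bar{x}$, reduces to $\gamma L\bar{x}-\bar{v}\in B(L\bar{x})$. Setting $\bar{u}:=\gamma L\bar{x}-\bar{v}$ and observing $\bar{x}=J_{A_1}(\bar{z}_1)$, these two inclusions are exactly the defining conditions of $\mathbf{Z}$, which establishes part (ii).

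For part (i), I would run the same computation in reverse. Given $(\bar{x},\bar{u})\in\mathbf{Z}$, the primal inclusion $-L^*\bar{u}\in\sum_{i=1}^n A_i(\bar{x})$ furnishes a decomposition $-L^*\bar{u}=\sum_{i=1}^n a_i$ with $a_i\in A_i(\bar{x})$. I would then set $\bar{v}:=\gamma L\bar{x}-\bar{u}$ and define $\bar{z}_i:=\bar{x}+\sum_{k=1}^i a_k$ for $i\in\llbracket 1,n-1\rrbracket$. A short induction through~\eqref{eq:xy} shows $x_i=\bar{x}$ for every $i$: the choice of $\bar{z}_1$ yields $x_1=\bar{x}$; each difference $\bar{z}_i-\bar{z}_{i-1}=a_i\in A_i(\bar{x})$ forces $x_i=\bar{x}$; and for the $n$-th component the argument of $J_{A_n}$ equals $2\bar{x}-\bar{z}_{n-1}-L^*\bar{u}$, whose residual $\bar{x}-\bar{z}_{n-1}-L^*\bar{u}=-\sum_{k=1}^{n-1}a_k-L^*\bar{u}=a_n$ lies in $A_n(\bar{x})$, so $x_n=\bar{x}$. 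Finally, $\bar{u}\in B(L\bar{x})$ makes $y=L\bar{x}$. Thus the increment in~\eqref{eq:defT} vanishes and $(\barbf{z},\bar{v})=(\barbf{z},\gamma L\bar{x}-\bar{u})\in\Fix T$.

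The concluding equivalence $\Fix T\neq\emptyset\Leftrightarrow\mathbf{Z}\neq\emptyset$ is then immediate, since part (i) converts a point of $\mathbf{Z}$ into a fixed point and part (ii) does the converse. I expect the only delicate step to be the bookkeeping around the $n$-th resolvent: its asymmetric argument, mixing $x_1$, $z_{n-1}$ and the coupling term $L^*(\gamma Lx_1-v)$, is engineered precisely so that after the telescoping sum the coupling collapses to $-L^*\bar{u}$ while the dual variable is recovered as $\bar{u}=\gamma L\bar{x}-\bar{v}$. Keeping the signs and the $L^*$-term consistent there is where the write-up must be done carefully.
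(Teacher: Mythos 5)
Your proposal is correct and follows essentially the same route as the paper: the same decomposition $-L^*\bar u=\sum_{i=1}^n a_i$ with $a_i\in A_i(\bar x)$, the same choice $\bar z_i=\bar x+\sum_{k=1}^i a_k$ and $\bar v=\gamma L\bar x-\bar u$ for part (i), and the same telescoping sum of the resolvent inclusions for part (ii). The bookkeeping around the $n$-th resolvent and the $J_{B/\gamma}$ line, which you flag as the delicate step, checks out exactly as in the paper's computation.
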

\begin{proof}
(\ref{it:ZFix-i}) Let $(\bar{x},\bar{u})\in\mathbf{Z}$. Then, $\bar{u} \in B(L\bar{x})$ and there exists $(a_1,\ldots,a_n)\in\Hilbert^{n}$ such that $a_i\in A_i(\bar{x})$ and $-L^*\bar{u} = \sum_{i=1}^n a_i$. Consider the vectors $(\bar{z}_1,\ldots,\bar{z}_{n-1},\bar{v})\in \Hilbert^{n-1}\times \Gilbert$ defined as
\begin{equation*}
\left\{
\begin{aligned}
\bar{z}_1 & := \bar{x} + a_1 \in (\Id + A_1)(\bar{x}), \\
\bar{z}_i & := a_i + \bar{z}_{i-1} = (\Id + A_i)(\bar{x}) - \bar{x} + \bar{z}_{i-1}, \quad \forall i\in{\llbracket2,n-1\rrbracket}, \\
\bar{v} & : = \gamma L\bar{x}-\bar{u} \in \left(\gamma \Id - B\right)(L\bar{x}).
\end{aligned}
\right.
\end{equation*}
Then, we deduce that $\bar{x} = J_{A_1}(\bar{z}_1)$ and $\bar{x} = J_{A_i} ( \bar{z}_i + \bar{x}-\bar{z}_{i-1})$ for all $i\in{\llbracket 2,n-1\rrbracket}$. Moreover, we have
\begin{equation*}
    \begin{aligned}
    2\bar{x} - \bar{z}_{n-1}- L^*(\gamma L\bar{x}-\bar{v})& = 2\bar{x} -\bar{z}_{n-1} - L^*(\bar{u}) \\
    & = \bar{x} + a_n + \bar{x}-\bar{z}_{n-1} + \sum_{i=1}^{n-1} a_i \\
    & = \bar{x}+a_n + \bar{x}-\bar{z}_{n-1} + \sum_{i=2}^{n-1} (\bar{z}_i - \bar{z}_{i-1}) + \bar{z}_1-\bar{x}  = (\Id + A_n)(\bar{x}).
    \end{aligned}
\end{equation*}
Altogether, we obtain
\begin{equation*}
    \left\{
    \begin{aligned}
        \bar{x} &= J_{A_1}(\bar{z}_1), \\
        \bar{x} &= J_{A_i}(\bar{z}_i+\bar{x}-\bar{z}_{i-1}), \quad \forall i\in{\llbracket 2,n-1\rrbracket},  \\
        \bar{x} &=  J_{A_n}(2\bar{x} -\bar{z}_{n-1}- L^* (\gamma L\bar{x} - \bar{v})),  \\
        L\bar{x} & =  J_{B/\gamma}\left (2L\bar{x}- \frac{\bar{v}}{\gamma}\right),
    \end{aligned}
    \right.
\end{equation*}
which implies that $(\bar{z}_1,\ldots,\bar{z}_{n-1},\bar{v})\in\Fix{T}$.

(\ref{it:ZFix-ii}) Let $(\bar{z}_1,\ldots,\bar{z}_{n-1},\bar{v})\in\Fix{T}$ and set $\bar{x}:= J_{A}(\bar{z}_1)$. By~\eqref{eq:defT}, $y = L\bar{x}$ and  $\bar{x}_i = \bar{x}$ for all $i=1,\ldots,n$. Consequently, from~\eqref{eq:xy} we derive
\begin{equation*}
\left\{
    \begin{array}{l}
    \bar{z}_1-\bar{x} \in A_1(\bar{x}), \\
    \bar{z}_i-\bar{z}_{i-1} \in A_i(\bar{x}), \quad \forall i\in{\llbracket 2,n-1\rrbracket}, \\
    \bar{x}-\bar{z}_{n-1}-L^*(\gamma L\bar{x}-\bar{v})\in A_n(\bar{x}),  \\
    \gamma L\bar{x}-\bar{v} \in B (L\bar{x}).
    \end{array}
\right.
\end{equation*}
Summing together the first $n$ inclusions above and setting $\bar{u}:=\gamma L\bar{x}-\bar{v}$, we deduce
\begin{equation*}
\left\{
    \begin{aligned}
    -L^*\bar{u} & \in \sum_{i=1}^n A_i(\bar{x}), \\
    \bar{u} & \in B(L\bar{x}),
    \end{aligned}
    \right.
\end{equation*}
which implies $(\bar{x},\bar{u})\in\mathbf{Z}$, as claimed.
\end{proof}
The following lemma provides  nonexpansive properties of the operator $T$ in the Hilbert space $\Hilbert^{n-1} \times \Gilbert $ with scalar product given by
\begin{equation}\label{eq:normgamma}
    \langle (z_1,\ldots,z_{n-1},v),(\bar{z}_1,\ldots,\bar{z}_{n-1},\bar{v}) \rangle_{\gamma} : = \sum_{i=1}^{n-1} \langle z_i, \bar{z}_i\rangle_{\Hilbert} + \frac{1}{\gamma} \langle v, \bar{v} \rangle_{\Gilbert},
\end{equation}
for $(z_1,\ldots,z_{n-1},v),(\bar{z}_1,\ldots,\bar{z}_{n-1},\bar{v})\in\Hilbert^{n-1}\times\Gilbert$ and $\gamma > 0$.
\begin{lemma}\label{lema:averaged}
For all $(\mathbf{z},v) = (z_1,\ldots,z_{n-1},v)\in\Hilbert^{n-1}\times\Gilbert$ and  $(\barbf{z},\bar{v}) = (\bar{z}_1,\ldots,\bar{z}_{n-1},\bar{v})\in\Hilbert^{n-1}\times\Gilbert$,
\begin{equation}\label{eq:averaged}
\begin{aligned}
    \| T(\mathbf{z},v)&-T(\barbf{z},\bar{v})\|^2 _{\gamma} + \frac{1-\lambda}{\lambda}  \| \left(\Id-T\right)(\mathbf{z},v)-\left(\Id-T\right)(\barbf{z},\bar{v})\|^{2}_{\gamma}\\
    &+ \frac{1-\gamma \|L\|^2}{\lambda} \left\| \sum_{i=1}^{n-1} \left(\Id-T\right)(\mathbf{z},v)_{i} - \sum_{i=1}^{n-1} \left(\Id-T\right)(\barbf{z},\bar{v})_{i}\right\|^2_{\gamma} \leq \|(\mathbf{z},v)-(\barbf{z},\bar{v})\|^2_{\gamma},
\end{aligned}
\end{equation}
where $\|\cdot\|_\gamma$ denotes the norm induced by the scalar product~\eqref{eq:normgamma}.
In particular, if $\lambda \in{]0,1[}$ and $\gamma\in{\left]0,\frac{1}{\|L\|^2}\right]}$, the operator $T$ is $\lambda$-averaged nonexpansive.
\end{lemma}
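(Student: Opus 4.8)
The plan is to verify the ``master inequality''~\eqref{eq:averaged} directly for arbitrary $\lambda,\gamma>0$ and then read off averagedness by discarding nonnegative terms. Write $\Delta_0:=(\mathbf{z},v)-(\barbf{z},\bar{v})$ and let $\Delta x_i,\Delta y,\Delta z_i,\Delta v$ denote the corresponding differences of the quantities in~\eqref{eq:xy}. Since $T=\Id+\lambda G$, where $G$ is the bracketed update map in~\eqref{eq:defT}, setting $\Delta G:=G(\mathbf{z},v)-G(\barbf{z},\bar{v})$ gives $(\Id-T)$-difference $=-\lambda\Delta G$, and the sum-difference equals $-\lambda(\Delta x_n-\Delta x_1)$ because the $\Hilbert$-components of $G$ telescope. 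Expanding $\|\Delta_0+\lambda\Delta G\|_\gamma^2$, the term $\|\Delta_0\|_\gamma^2$ cancels and the coefficient $\lambda^2+(1-\lambda)\lambda$ collapses to $\lambda$, so after dividing by $\lambda>0$ the whole statement~\eqref{eq:averaged} reduces to
\begin{equation*}
2\langle\Delta_0,\Delta G\rangle_\gamma+\|\Delta G\|_\gamma^2+(1-\gamma\|L\|^2)\|\Delta x_1-\Delta x_n\|^2\leq 0.
\end{equation*}
This already holds for every $\lambda,\gamma>0$; the restriction in the ``in particular'' clause only makes the coefficients $\tfrac{1-\lambda}{\lambda}$ and $\tfrac{1-\gamma\|L\|^2}{\lambda}$ nonnegative, whence dropping the (now nonnegative) third term in~\eqref{eq:averaged} yields exactly Definition~\ref{def:cocoercive}(\ref{defT:iii}) in the metric~\eqref{eq:normgamma}.

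To prove the displayed inequality I would introduce the resolvent residuals $r_1:=z_1-x_1\in A_1(x_1)$, $r_i:=z_i+x_{i-1}-z_{i-1}-x_i\in A_i(x_i)$ for $i\in\llbracket2,n-1\rrbracket$, $r_n:=x_1+x_{n-1}-z_{n-1}-L^*(\gamma Lx_1-v)-x_n\in A_n(x_n)$ and $r_B:=\gamma L(x_1+x_n)-v-\gamma y\in B(y)$, together with their barred counterparts; monotonicity of the $A_i$ and of $B$ then gives $\langle\Delta x_i,\Delta r_i\rangle\geq0$ and $\langle\Delta y,\Delta r_B\rangle\geq0$. The crucial structural step is to solve the recursion defining the $z_i$ for the differences, obtaining $\Delta z_i=\Delta x_i+\sum_{k=1}^i\Delta r_k$ for $i\in\llbracket1,n-1\rrbracket$ and, feeding this into $\Delta r_n$, the telescoped identity $\sum_{k=1}^n\Delta r_k=\Delta x_1-\Delta x_n-\gamma L^*L\Delta x_1+L^*\Delta v$.

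With these identities I would rewrite $2\langle\Delta_0,\Delta G\rangle_\gamma+\|\Delta G\|_\gamma^2$ as $\|\Delta_0+\Delta G\|_\gamma^2-\|\Delta_0\|_\gamma^2$ and substitute $\Delta z_i=\Delta x_i+\sum_{k\leq i}\Delta r_k$. An Abel summation collapses the $\Hilbert$-part into $\|\Delta x_n\|^2-\|\Delta x_1\|^2+2\langle\sum_{k\leq n-1}\Delta r_k,\Delta x_n\rangle-2\sum_{j\leq n-1}\langle\Delta r_j,\Delta x_j\rangle$, while the $\Gilbert$-part contributes $2\langle\Delta v,\Delta y-L\Delta x_n\rangle+\gamma\|\Delta y-L\Delta x_n\|^2$. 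Replacing $\sum_{k\leq n-1}\Delta r_k$ via the telescoped identity, the term $-2\langle\Delta r_n,\Delta x_n\rangle$ joins the earlier sum to give $-2\sum_{j=1}^n\langle\Delta r_j,\Delta x_j\rangle$ (nonnegative by $A$-monotonicity), and the $\pm2\langle\Delta v,L\Delta x_n\rangle$ pair off, leaving a purely quadratic remainder. Adding $2\langle\Delta y,\Delta r_B\rangle\geq0$ to that remainder, the $\|\Delta y\|^2$, $\langle\Delta v,\Delta y\rangle$ and $\langle\Delta y,L\Delta x_n\rangle$ terms cancel and what survives is $\gamma\bigl(-\|\Delta y-L\Delta x_1\|^2+\|L(\Delta x_1-\Delta x_n)\|^2-\|L\|^2\|\Delta x_1-\Delta x_n\|^2\bigr)$.

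The final step, and the only genuine inequality in an otherwise exact computation, is the operator-norm bound $\|L(\Delta x_1-\Delta x_n)\|^2\leq\|L\|^2\|\Delta x_1-\Delta x_n\|^2$; together with $-\|\Delta y-L\Delta x_1\|^2\leq0$ this makes the surviving remainder nonpositive, so the target is dominated by $-2\sum_{j=1}^n\langle\Delta r_j,\Delta x_j\rangle-2\langle\Delta y,\Delta r_B\rangle\leq0$, which is immediate. I expect the main obstacle to be purely organizational: getting the residual-to-$z$ recursion and the Abel summation exactly right so that every cross term cancels, the operator norm is invoked at precisely one place, and the completed square $-\|\Delta y-L\Delta x_1\|^2$ is left to absorb the $B$-residual. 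Nothing beyond firm nonexpansiveness (encoded by the monotonicity inequalities), the two telescoping identities and the definition of $\|L\|$ is needed.
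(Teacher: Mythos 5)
Your proposal is correct and follows essentially the same route as the paper's proof: the same four monotonicity inequalities (for $A_1,\ldots,A_n$ and $B$), the same telescoping of the $z$-recursion, a single application of $\|Lw\|\le\|L\|\,\|w\|$, and the same discarded square $-\gamma\|\Delta y-L\Delta x_1\|^2$, which in the paper appears as Young's inequality applied to $\gamma\langle Lx_1-L\bar x_1,y-\bar y\rangle$ in~\eqref{eq:term6}. The only differences are organizational: you normalize out $\lambda$ at the outset and bookkeep via the residuals $r_i$ and an Abel summation, whereas the paper carries $\lambda$ throughout and expands each term of the summed monotonicity inequality with polarization identities; the underlying computation is identical.
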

\begin{proof}
Let $(x_1,\ldots,x_{n},y)\in\Hilbert^{n}\times\Gilbert$ and $(\bar{x}_1,\ldots,\bar{x}_n,\bar{y})\in\Hilbert^n \times \Gilbert$ be given by~\eqref{eq:xy} from $(\mathbf{z},v)$ and $(\barbf{z},\bar{v})$, respectively. For simplicity, we denote $(\mathbf{z}^{+},v^{+})= T(\mathbf{z},v)$ and $(\barbf{z}^{+},\bar{v}^{+})=T(\barbf{z},\bar{v})$.  Since $z_1-x_1\in A_1(x_1)$ and $\bar{z}_1-\bar{x}_1\in A_1(\bar{x}_1)$, by monotonicity of $A_1$
\begin{equation}\label{eq:monoA1}
\begin{aligned}
    0 & \leq \langle  (z_1-x_1) - (\bar{z}_1-\bar{x}_1), x_1-\bar{x}_1 \rangle \\
    & = \langle (z_1-x_1) -(\bar{z}_1-\bar{x}_1), x_1-x_2\rangle + \langle  (z_1-x_1) -(\bar{z}_1-\bar{x}_1), x_2 - \bar{x}_1\rangle.
    \end{aligned}
\end{equation}
For every $i\in{\llbracket 2,n-1\rrbracket}$, we have $z_i+x_{i-1}-z_{i-1}-x_i \in A_i(x_i)$ and $\bar{z}_i + \bar{x}_{i-1}-\bar{z}_{i-1}-\bar{x}_i\in A_i(\bar{x}_i)$ and thus, by monotonicity of $ A_i$
\begin{equation}\label{eq:monoAi}
    \begin{aligned}
    0 & \leq \langle (z_i+x_{i-1}-z_{i-1}-x_i)- (\bar{z}_i + \bar{x}_{i-1}-\bar{z}_{i-1}-\bar{x}_i), x_i - \bar{x}_i \rangle \\
    & = \langle (z_i-x_i)-(\bar{z}_i-\bar{x}_i), x_i-\bar{x}_i\rangle - \langle (z_{i-1}-x_{i-1})-(\bar{z}_{i-1}-\bar{x}_{i-1}), x_i-\bar{x}_i\rangle \\
    & = \langle (z_i-x_i)-(\bar{z}_i-\bar{x}_i),x_i-x_{i+1}\rangle + \langle (z_i-x_i)-(\bar{z}_i-\bar{x}_i), x_{i+1}-\bar{x}_i\rangle \\
    & \quad - \langle (z_{i-1}-x_{i-1})-(\bar{z}_{i-1}-\bar{x}_{i-1}), x_i-\bar{x}_{i-1} \rangle \\
    & \quad- \langle (z_{i-1}-x_{i-1})-(\bar{z}_{i-1}-\bar{x}_{i-1}),\bar{x}_{i-1}-\bar{x}_i\rangle.
    \end{aligned}
\end{equation}
Now, since  $x_1+x_{n-1}-z_{n-1}-x_n - L^*\left( \gamma Lx_1-v\right) \in A_n(x_n) $ and $\bar{x}_1+\bar{x}_{n-1}-\bar{z}_{n-1}-\bar{x}_n - L^*\left(\gamma L\bar{x}_1-\bar{v}\right)\in A_n(\bar{x}_n)$, again monotonicity of $A_n$ results in the inequality
\begin{equation}\label{eq:monoAn}
\begin{aligned}
    0 & \leq \left\langle x_1+x_{n-1}-z_{n-1}-x_n - L^*\left( \gamma Lx_1-v\right), x_n-\bar{x}_n\right\rangle\\
    & \quad
    -\left\langle \bar{x}_1+\bar{x}_{n-1}-\bar{z}_{n-1}-\bar{x}_n - L^*\left( \gamma L\bar{x}_1-\bar{v}\right), x_n-\bar{x}_n\right\rangle\\
    & = \langle (x_{n-1}-z_{n-1}) -(\bar{x}_{n-1}-\bar{z}_{n-1}), x_{n}-\bar{x}_n \rangle + \langle (x_1-\bar{x}_1)-(x_{n}-\bar{x}_{n}),x_n-\bar{x}_{n} \rangle \\
    & \quad - \langle \gamma \left(Lx_1-L\bar{x}_1\right)-(v-\bar{v}), Lx_n-L\bar{x}_n\rangle \\
    & = \langle (x_{n-1}-z_{n-1}) -(\bar{x}_{n-1}-\bar{z}_{n-1}), x_{n}- \bar{x}_{n-1} \rangle+ \langle (x_1-\bar{x}_1)-(x_{n}-\bar{x}_{n}),x_n-\bar{x}_{n} \rangle\\
    & \quad + \langle (x_{n-1}-z_{n-1}) -(\bar{x}_{n-1}-\bar{z}_{n-1}), \bar{x}_{n-1}-  \bar{x}_n \rangle \\
    &\quad - \langle \gamma \left(Lx_1-L\bar{x}_1\right)-(v-\bar{v}), Lx_n-L\bar{x}_n\rangle. \\
\end{aligned}
\end{equation}
Finally, we have $\gamma L(x_1+x_n)-v -\gamma y \in B(y)$ and $\gamma L(\bar{x}_1+\bar{x}_n)-\bar{v}-\gamma \bar{y}\in B(\bar{y})$, so by monotonicity of $B$ we get
\begin{equation}\label{eq:monoB}
    \begin{aligned}
    0 & \leq \langle (\gamma L(x_1+x_n) -v-\gamma \,y)-(\gamma L(\bar{x}_1+\bar{x}_n)-\bar{v}-\gamma\, \bar{y}), y-\bar{y} \rangle.  \\
    \end{aligned}
\end{equation}
Summing together~\eqref{eq:monoA1}-\eqref{eq:monoB} and rearranging, yields
\begin{equation}\label{eq:monoAB}
    \begin{aligned}
    0 & \leq \sum_{i=1}^{n-1} \langle (x_i-x_{i+1})- (\bar{x}_i-\bar{x}_{i+1}), z_i-\bar{z}_i\rangle  + \sum_{i=1}^{n-1} \langle (x_i-\bar{x}_i)-(x_{i+1}-\bar{x}_{i+1}), \bar{x}_i-x_i\rangle \\
    & \quad + \langle (x_1-\bar{x}_1) - (x_n-\bar{x}_n), x_n-\bar{x}_n\rangle +\langle \left(Lx_n-L\bar{x}_n\right)-(y-\bar{y}), v-\bar{v}\rangle  \\
     & \quad + \gamma \langle \bigl(L(x_1+x_n)-L(\bar{x}_1+\bar{x}_n)\bigr)- (y-\bar{y}), y-\bar{y}\rangle \\
    & \quad - \gamma \langle Lx_1-L\bar{x}_1,Lx_n-L\bar{x}_n\rangle .
    \end{aligned}
\end{equation}
The sums in~\eqref{eq:monoAB} can be written, respectively, as
\begin{equation}\label{eq:sum1}
    \begin{aligned}
    \sum_{i=1}^{n-1}& \langle (x_i-x_{i+1})-(\bar{x}_i-\bar{x}_{i+1}), z_i-\bar{z}_i\rangle  \\
     & = \frac{1}{\lambda} \sum_{i=1}^{n-1}\langle  (z_i-z_i^{+})-(\bar{z}_i-\bar{z}_i^{+}), z_i-\bar{z}_i\rangle \\
     & = \frac{1}{\lambda} \langle (\mathbf{z}-\mathbf{z}^{+})-(\barbf{z}-\barbf{z}^{+}),\mathbf{z}-\barbf{z}\rangle \\
     & = \frac{1}{2\lambda} \left( \|(\mathbf{z}-\mathbf{z}^{+})-(\barbf{z}-\barbf{z}^{+})\|^2 - \| \mathbf{z}^{+}-\barbf{z}^{+}\|^2 +  \|\mathbf{z}- \barbf{z}\|^2 \right),
    \end{aligned}
\end{equation}
and
\begin{equation}\label{eq:sum2}
\begin{aligned}
 \sum_{i=1}^{n-1}& \langle (x_i-\bar{x}_i)-(x_{i+1}-\bar{x}_{i+1}), \bar{x}_i-x_i\rangle \\
 & = \frac{1}{2} \sum_{i=1}^{n-1} \left( \|x_{i+1}-\bar{x}_{i+1}\|^2 - \|x_i-\bar{x}_i\|^2 - \| (x_i-x_{i+1})-(\bar{x}_i-\bar{x}_{i+1})\|^2 \right) \\
 & = \frac{1}{2} \left( \|x_n-\bar{x}_n\|^2 - \|x_1 -\bar{x}_1\|^2 - \frac{1}{\lambda^2} \sum_{i=1}^{n-1} \| (z_i-z_i^{+})-(\bar{z}_i- \bar{z}_i^{+})\|^2 \right) \\
 & = \frac{1}{2} \left( \|x_n-\bar{x}_n\|^2 - \|x_1 -\bar{x}_1\|^2 - \frac{1}{\lambda ^2} \| (\mathbf{z}-\mathbf{z}^{+}) - (\barbf{z}-\barbf{z}^{+})\|^2 \right).
\end{aligned}
\end{equation}
The third term in~\eqref{eq:monoAB}, becomes
\begin{equation}\label{eq:term3}
    \langle (x_1-\bar{x}_1) - (x_n-\bar{x}_n), x_n-\bar{x}_n\rangle = \frac{1}{2} \left( \|x_1-\bar{x}_1\|^2 - \| x_n-\bar{x}_n\|^2 - \|(x_1-\bar{x}_1)-(x_n-\bar{x}_n)\|^2 \right),
\end{equation}
while the fourth term yields
\begin{equation}\label{eq:term4}
\begin{aligned}
\langle (Lx_n-&L\bar{x}_n) - (y-\bar{y}), v-\bar{v} \rangle   \\
& = \frac{1}{\gamma \lambda } \langle (v-v^{+})-(\bar{v}-\bar{v}^{+}), v-\bar{v}\rangle  \\
& = \frac{1}{2\gamma \lambda} \left( \|(v-v^{+}) -(\bar{v}-\bar{v}^{+})\|^2 - \|v^{+}-\bar{v}^{+}\|^2 + \|v-\bar{v}\|^2 \right).
\end{aligned}
\end{equation}
Lastly, making use of the Cauchy--Schwarz and Young's inequalities, the second last term of~\eqref{eq:monoAB} gives
\begin{equation}\label{eq:term6}
    \begin{aligned}
    \gamma \langle &\bigl(L(x_1+x_n)-L(\bar{x}_1+\bar{x}_n)\bigr)- (y-\bar{y}), y-\bar{y}\bigr \rangle  \\
    & = \gamma \left(\langle  Lx_1-L\bar{x}_1, y-\bar{y}\rangle + \langle  \left(Lx_n-L\bar{x}_n\right)-(y-\bar{y}), y-\bar{y}\rangle \right) \\
    & = \frac{\gamma}{2} \left(\|Lx_n-L\bar{x}_n\|^2 -  \|\left(Lx_n-L\bar{x}_n\right)-(y-\bar{y})\|^2 -\|y-\bar{y}\|^2 \right) \\
    &\quad + \gamma \langle  Lx_1-L\bar{x}_1, y-\bar{y}\rangle  \\
    & \leq \frac{\gamma}{2} \left(\|Lx_n-L\bar{x}_n\|^2 - \frac{1}{\gamma ^2 \lambda^2} \|(v-v^{+})-(\bar{v}- \bar{v}^{+})\|^2 -\|y-\bar{y}\|^2 \right)\\
    & \quad + \frac{\gamma}{2} \|Lx_1-L\bar{x}_1\|^2 + \frac{\gamma}{2} \|y-\bar{y}\|^2 \\
    & = \frac{\gamma}{2} \|Lx_1-L\bar{x}_1\|^2 + \frac{\gamma}{2} \|Lx_n-L\bar{x}_n\|^2 - \frac{1}{2 \gamma \lambda^2} \|(v-v^{+})-(\bar{v}- \bar{v}^{+})\|^2,
    \end{aligned}
\end{equation}
while the last term can be rearranged as follows
\begin{equation}\label{eq:term5}
    \begin{aligned}
    - \gamma \langle &Lx_1-L\bar{x}_1, Lx_n-L\bar{x}_n\rangle  \\
    & = \frac{\gamma }{2} \left( \| L(x_1-x_n) - L(\bar{x}_1-\bar{x}_n)\|^2 - \|Lx_1-L\bar{x}_1\|^2 - \|Lx_n-L\bar{x}_n\|^2 \right).
    \end{aligned}
\end{equation}
Summing together~\eqref{eq:term6} and~\eqref{eq:term5} and using the Lipschitz continuity of $L$, we
get
\begin{equation}\label{eq:last2}
    \begin{aligned}
     \gamma \langle& \bigl(L(x_1+x_n)-L(\bar{x}_1+\bar{x}_n)\bigr)- (y-\bar{y}), y-\bar{y}\rangle - \gamma \langle Lx_1-L\bar{x}_1,Lx_n-L\bar{x}_n\rangle \\
    & = \frac{\gamma }{2}  \| L(x_1-x_n) - L(\bar{x}_1-\bar{x}_n)\|^2  - \frac{1}{2 \gamma \lambda^2} \|(v-v^{+})-(\bar{v}- \bar{v}^{+})\|^2 \\
    & \leq \frac{\gamma \|L\|^2 }{2}  \| (x_1-x_n) - (\bar{x}_1-\bar{x}_n)\|^2  - \frac{1}{2 \gamma \lambda^2} \|(v-v^{+})-(\bar{v}- \bar{v}^{+})\|^2.
    \end{aligned}
\end{equation}
Multiplying~\eqref{eq:monoAB} by $2\lambda$ and substituting equations~\eqref{eq:sum1}-\eqref{eq:last2}, we obtain the final inequality
\begin{equation*}
\begin{aligned}
    &\|\mathbf{z}^{+}-\barbf{z}^{+}\|^2  + \left(\frac{1}{\lambda}-1\right)\left(\|(\mathbf{z}-\mathbf{z}^{+})-(\barbf{z}-\barbf{z}^{+})\|^2 + \frac{1}{\gamma}\|(v-v^{+})-(\bar{v}-\bar{v}^{+})\|^2 \right) \\
    &  + \frac{1}{\gamma}\|v^{+}-\bar{v}^{+}\|^2+\lambda \left(1-\gamma \|L\|^2\right) \|(x_1-x_n) -(\bar{x}_1-\bar{x}_n)\|^2 \\
    & \quad \leq \|\mathbf{z}-\barbf{z}\|^2 +\frac{1}{\gamma} \|v -\bar{v}\|^2.
    \end{aligned}
\end{equation*}
To complete the proof, just note that
\begin{align*}
    \lambda  (x_1-x_n)-\lambda(\bar{x}_1-\bar{x}_n)&= \lambda \sum_{i=1}^{n-1} (x_i-x_{i+1}) - \lambda \sum_{i=1}^{n-1} (\bar{x}_i-\bar{x}_{i+1}) \\
    &= \sum_{i=1}^{n-1} (z_i-z_i^{+}) - \sum_{i=1}^{n-1} (\bar{z}_i-\bar{z}_i^{+}),
\end{align*}
from where~\eqref{eq:averaged}  finally follows.
\end{proof}

Next we state our main result, which establishes the convergence of the iterative algorithm defined by the operator $T$ in~\eqref{eq:defT}-\eqref{eq:xy}.\pagebreak
\begin{theorem}\label{t:conv}
Let $n\geq 2$, let $L:\Hilbert \to \Gilbert$ be a bounded linear operator and let $A_1,\ldots,A_n:\Hilbert\setto\Hilbert$ and $B:\Gilbert\setto\Gilbert$ be maximally monotone operators with $\zer{\left(\sum_{i=1}^n A_i + L^*BL\right)\neq\emptyset}$. Further, let $\lambda\in{]0,1[}$ and $\gamma\in{\left]0,\frac{1}{\|L\|^2}\right]}$. Given an initial point $(\mathbf{z}^0,v^0) = (z_1^0,\ldots,z_{n-1}^0,v^0)\in\Hilbert^{n-1}\times\Gilbert$, consider the sequences given by
\begin{equation}\label{eq:zkvk}
    \colvec{\mathbf{z}^{k+1}\\v^{k+1}} = \colvec{\mathbf{z}^k\\v^k} + \lambda \colvec{x_2^k - x_1^k\\ x_3^k- x_2^k\\ \vdots\\ x_n^k-x_{n-1}^k\\ \gamma(y^k-Lx_n^k)} \quad  \forall k\geq 0,
\end{equation}
with
\begin{equation}\label{eq:xkyk}
    \left\{
    \begin{aligned}
        x_1^k &= J_{A_1}(z_1^k), \\
        x_i^k &= J_{A_i}(z_i^k+x_{i-1}^k-z_{i-1}^k), \quad \forall i\in{\llbracket 2,n-1\rrbracket},  \\
        x_n^k &=  J_{A_n}(x_1^k+x_{n-1}^k -z_{n-1}^k- L^* (\gamma Lx_1^k - v^k)), \\
        y^k & =  J_{B/\gamma} \left(L(x_1^k + x_{n}^k)- \frac{v^k}{\gamma}\right).
    \end{aligned}
    \right.
\end{equation}
Then the following statements hold.
\begin{enumerate}[(i)]
\item\label{it:conv-i} The sequence $(\mathbf{z}^k,v^k)_{k\in\mathbb{N}}$ converges weakly  to a point $(\barbf{z},\bar{v})\in\Fix{T}$.
\item\label{it:conv-ii} The sequence $(x^k_1,\ldots,x^k_n,y^k)_{k\in\mathbb{N}}$ converges weakly to $(\bar{x},\ldots,\bar{x},L\bar{x})$ with $\bar{x} \in \Primal$.
\item\label{it:conv-iii} The sequence $\bigl(\gamma Lx_i^k-v^k \bigr)_{k\in\mathbb{N}}$ converges weakly to $\gamma L\bar{x}-\bar{v}\in\Dual$, for all $i\in{\llbracket 1,n\rrbracket}$.
\end{enumerate}
\end{theorem}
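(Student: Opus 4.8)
The plan is to recognize the recursion \eqref{eq:zkvk}--\eqref{eq:xkyk} as the Picard iteration $(\mathbf{z}^{k+1},v^{k+1})=T(\mathbf{z}^k,v^k)$ of the operator $T$ from \eqref{eq:defT}, and then run the standard machinery for averaged maps. By Lemma~\ref{lema:averaged}, under $\lambda\in{]0,1[}$ and $\gamma\in{]0,1/\|L\|^2]}$ the operator $T$ is $\lambda$-averaged nonexpansive on $(\Hilbert^{n-1}\times\Gilbert,\langle\cdot,\cdot\rangle_\gamma)$, and since $\zer(\sum_{i=1}^n A_i + L^*BL)=\Primal\neq\emptyset$, the chain of equivalences preceding Lemma~\ref{lema:ZFix} gives $\mathbf{Z}\neq\emptyset$, whence $\Fix T\neq\emptyset$ by that lemma. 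The classical weak-convergence theorem for averaged operators (see, e.g., \cite[Theorem~5.15]{bauschke2017}) then yields \eqref{it:conv-i}: $(\mathbf{z}^k,v^k)\wto(\barbf{z},\bar{v})\in\Fix T$, together with the residual estimate $(\Id-T)(\mathbf{z}^k,v^k)\to0$ strongly. Reading this componentwise through \eqref{eq:defT} produces the strong limits $x_{i+1}^k-x_i^k\to0$ for $i\in\llbracket1,n-1\rrbracket$ and $y^k-Lx_n^k\to0$, which will be the engine of everything that follows. Setting $\bar{x}:=J_{A_1}(\bar{z}_1)$, Lemma~\ref{lema:ZFix}(\ref{it:ZFix-ii}) already identifies $(\bar{x},\gamma L\bar{x}-\bar{v})\in\mathbf{Z}$, so $\bar{x}\in\Primal$ and $\gamma L\bar{x}-\bar{v}\in\Dual$.

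For \eqref{it:conv-ii}--\eqref{it:conv-iii} I must upgrade weak convergence of $(\mathbf{z}^k,v^k)$ to weak convergence of the auxiliary iterates $(x_1^k,\dots,x_n^k,y^k)$; the difficulty is that resolvents are not weakly continuous, so $z_1^k\wto\bar{z}_1$ does not by itself give $x_1^k=J_{A_1}(z_1^k)\wto\bar{x}$. First I would note that $(x_1^k,\dots,x_n^k,y^k)$ is bounded (resolvents are nonexpansive and $(\mathbf{z}^k,v^k)$ is bounded), extract a weak cluster point $x_1^{k_\ell}\wto\hat{x}$ along a subsequence, and use $x_{i+1}^k-x_i^k\to0$, $y^k-Lx_n^k\to0$ and the weak-to-weak continuity of the bounded operators $L,L^*$ to conclude $x_i^{k_\ell}\wto\hat{x}$ for every $i$ and $y^{k_\ell}\wto L\hat{x}$. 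The crux is then to show $(\hat{x},\gamma L\hat{x}-\bar{v})\in\mathbf{Z}$.

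To pass to the limit I would rewrite \eqref{eq:xkyk} as a single inclusion $w^k\in M(x_1^k,\dots,x_n^k,y^k)$ for the product operator $M:=A_1\times\cdots\times A_n\times B$, which is maximally monotone on $\Hilbert^n\times\Gilbert$; here $w^k$ collects the residuals $z_1^k-x_1^k$, then $z_i^k-z_{i-1}^k+x_{i-1}^k-x_i^k$, then the $A_n$-residual $x_1^k+x_{n-1}^k-z_{n-1}^k-L^*(\gamma Lx_1^k-v^k)-x_n^k$, and finally the $B$-residual $\gamma L(x_1^k+x_n^k)-v^k-\gamma y^k$. Using the strong vanishing of the differences and continuity of $L,L^*$ one checks $w^k\wto\hat{w}$ with an explicit $\hat{w}$ whose first component is $\bar{z}_1-\hat{x}$. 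The main obstacle is the demiclosedness step: to invoke weak-weak closedness of $\gra M$ I must verify $\limsup_\ell\langle(x^{k_\ell},y^{k_\ell}),w^{k_\ell}\rangle\le\langle(\hat{x},\dots,\hat{x},L\hat{x}),\hat{w}\rangle$. I expect to prove the stronger fact that both sides vanish: the right-hand side collapses because the $\bar{z}_j$-contributions telescope to $0$ and the $\hat{x}$-quadratic and $L\hat{x}$-terms cancel through the adjoint identity; the left-hand side tends to $0$ because the $z$-terms telescope to $\langle x_1^k,\,z_1^k+\sum_{i=2}^{n-1}(z_i^k-z_{i-1}^k)-z_{n-1}^k\rangle+o(1)=o(1)$, the $x$-quadratic terms cancel using $x_i^k-x_1^k\to0$, and the $L$- and $v$-terms cancel using $y^k-Lx_n^k\to0$. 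Granting this, demiclosedness gives $\hat{w}\in M(\hat{x},\dots,\hat{x},L\hat{x})$; summing the first $n$ inclusions telescopes to $-L^*(\gamma L\hat{x}-\bar{v})\in\sum_{i=1}^n A_i(\hat{x})$, while the last gives $\gamma L\hat{x}-\bar{v}\in B(L\hat{x})$, i.e. $(\hat{x},\gamma L\hat{x}-\bar{v})\in\mathbf{Z}$ and $\hat{x}\in\Primal$.

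Finally, to identify the cluster point and obtain convergence of the whole sequence, I would exploit monotonicity of $A_1$ alone: the demiclosedness step yields $(\hat{x},\bar{z}_1-\hat{x})\in\gra A_1$, while $\bar{x}=J_{A_1}(\bar{z}_1)$ gives $(\bar{x},\bar{z}_1-\bar{x})\in\gra A_1$, so monotonicity forces $0\le\langle\hat{x}-\bar{x},(\bar{z}_1-\hat{x})-(\bar{z}_1-\bar{x})\rangle=-\|\hat{x}-\bar{x}\|^2$, hence $\hat{x}=\bar{x}$. Thus every weak cluster point of the bounded sequence $(x_1^k)$ equals $\bar{x}$, so $x_1^k\wto\bar{x}$ and, via the vanishing differences, $x_i^k\wto\bar{x}$ for all $i$ and $y^k\wto L\bar{x}$, proving \eqref{it:conv-ii}. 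For \eqref{it:conv-iii} it then suffices to note that $\gamma Lx_i^k-v^k\wto\gamma L\bar{x}-\bar{v}$ for each $i\in\llbracket1,n\rrbracket$ by weak-to-weak continuity of $L$, with $\gamma L\bar{x}-\bar{v}\in\Dual$ already recorded in the first paragraph.
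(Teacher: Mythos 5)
Your argument is correct and follows the paper's overall template (Picard iteration of the averaged operator $T$ from Lemma~\ref{lema:averaged}, nonemptiness of $\Fix T$ via Lemma~\ref{lema:ZFix}, asymptotic regularity giving $x_{i+1}^k-x_i^k\to0$ and $y^k-Lx_n^k\to0$, then identification of weak cluster points), but the key limit-passage step is handled by a genuinely different device. The paper does not use the plain product operator $A_1\times\cdots\times A_n\times B$; instead it assembles the operator $C$ in~\eqref{eq:operator s} as $A_1^{-1}\times\cdots\times A_{n-1}^{-1}\times A_n\times B^{-1}$ plus a skew-symmetric linear map, which is again maximally monotone. The point of inverting the first $n-1$ operators and $B$ is that in the resulting inclusion~\eqref{eq:demiclosed2} the \emph{image} side consists precisely of the differences $x_i^k-x_n^k$ and $y^k-Lx_n^k$ (plus $\gamma L^*(Lx_n^k-y^k)$), all of which converge strongly to $0$; the weak--strong sequential closedness of the graph of a maximally monotone operator then applies directly, with no inner-product condition to check. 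Your route keeps the untransformed product operator, so both the arguments and the residuals $w^k$ converge only weakly, and you must verify the condition $\limsup_\ell\langle(x^{k_\ell},y^{k_\ell}),w^{k_\ell}\rangle\le\langle(\hat{x},\dots,\hat{x},L\hat{x}),\hat{w}\rangle$ before invoking maximal monotonicity. That computation does close as you describe --- the $z$-terms telescope, the quadratic terms cancel up to $o(1)$ using the strong vanishing of successive differences, and the $L$- and $v$-terms cancel via the adjoint identity, so both sides equal $0$ --- but it is the delicate part of your proof and is exactly the work the paper's inversion trick is designed to avoid. Your endgame is also slightly different but equivalent: you identify the cluster point through monotonicity of $A_1$ (which amounts to single-valuedness of $J_{A_1}$ at $\bar{z}_1$), whereas the paper simply observes that the cluster point is determined by $\barbf{z}$ as $\bar x=J_{A_1}(\bar z_1)$; and you obtain $\bar x\in\Primal$ upfront from Lemma~\ref{lema:ZFix}(\ref{it:ZFix-ii}) rather than re-deriving the inclusions in the limit. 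Both approaches are sound; the paper's is shorter once the operator $C$ is set up, yours is more self-contained but computation-heavier.
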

\begin{proof}
(\ref{it:conv-i}) The sequence in~\eqref{eq:zkvk} is the fixed point iteration generated as
\begin{equation*}
\colvec{\mathbf{z}^{k+1}\\v^{k+1}} = T \colvec{\mathbf{z}^k\\v^k} \quad \forall k\geq 0.
\end{equation*}
Since $\lambda\in{]0,1[}$ and $\gamma\in{\left]0,\|L\|^{-2}\right]}$,  $T$ is averaged nonexpansive by Lemma~\ref{lema:averaged} and, moreover, $\Fix{T}=\emptyset$, due to $\mathbf{Z} \neq \emptyset$ and Lemma~\ref{lema:ZFix}(\ref{it:ZFix-i}). Then, by~\cite[Theorem~5.15]{bauschke2017} the sequence $(\mathbf{z}^k,v^k)_{k\in\mathbb{N}}$ converges weakly to a point $(\barbf{z},\bar{v})\in\Fix{T}$ and $\lim_{k\to\infty} \|(\mathbf{z}^{k+1},v^{k+1})-(\mathbf{z}^k,v^k)\|_{\gamma}= 0$.

(\ref{it:conv-ii}) From~(\ref{it:conv-i}), the sequence $(\mathbf{z}^k,v^k)_{k\in\mathbb{N}}$  is bounded. Then, nonexpansivity of the resolvents and boundedness of the linear operator $L$ imply that the sequence $(\mathbf{x}^k,y^k)_{k\in\mathbb{N}}=(x_1^k,\ldots,x_n^k,y^k)_{k\in\mathbb{N}}$ is also bounded. Further, the fact that $(\mathbf{z}^{k+1},v^{k+1})_{k\in\mathbb{N}}-(\mathbf{z}^k,v^k)_{k\in\mathbb{N}}\to 0$, as $k\to\infty$, implies by~\eqref{eq:zkvk} that
\begin{equation}\label{eq:strongconv}
y^k - Lx_n^k \to 0 \text{ and } x_{i+1}^k -x_i^k \to 0, \text{ for all } i\in{\llbracket 1,n-2\rrbracket}.
\end{equation}
Next, by making use of the definition of resolvents and~\eqref{eq:xkyk}, we can write
\begin{equation}\label{eq:demiclosed2}
C\begin{pmatrix}
z_1^k-x_1^k \\
(z_2^k-x_2^k)-(z_{1}^k-x_{1}^k)  \\
\vdots \\
(z_{n-1}^k-x_{n-1}^k)-(z_{n-2}^k-x_{n-2}^k) \\
x_n^k \\
\gamma \left(L(x_1^k+x_n^k)-y^k\right)-v^k
\end{pmatrix}
 \ni
\begin{pmatrix}
x_1^k-x_n^k \\
x_2^k-x_n^k \\
\vdots \\
x_{n-1}^k-x_n^k\\
x_1^k-x_n^k +\gamma L^*\left(Lx_n^k-y^k\right)\\
y^k- Lx_n^k
\end{pmatrix},\small
\end{equation}
where the operator $C\colon\Hilbert^n\times\Gilbert\setto\Hilbert^n\times\Gilbert$ is given by
\begin{equation}\label{eq:operator s}
 C:= \begin{pmatrix}
A_1^{-1}\\A_2^{-1} \\ \vdots \\ A_{n-1}^{-1} \\ A_n\\ B^{-1}
\end{pmatrix} + \begin{pmatrix}
0 & 0 & \dots & 0 & -\Id &0 \\
0 & 0 & \dots & 0 & -\Id &0\\
\vdots & \vdots & \ddots & \vdots & \vdots &\vdots\\
0 & 0 & \dots & 0 & -\Id &0\\
\Id & \Id & \dots & \Id & 0 & L^*\\
0 & 0 &\dots &0 &- L &0
\end{pmatrix}.
\end{equation}
The operator $C$ is  maximally monotone as the sum of a maximally monotone operator and a skew symmetric linear operator (see, e.g.,~\cite[Corollary~25.5~(i) \& Example~20.35]{bauschke2017}). Thus, the graph of $C$ is sequentially closed in the weak-strong topology, by demiclosedness of maximally monotone operators~\cite[Corollary~20.38]{bauschke2017}.

Now,  let $(\barbf{x},\bar{y})$ be a weak sequential cluster point of $(\mathbf{x}^k,y^k)_{k\in\mathbb{N}}$. Due to~\eqref{eq:strongconv}, $\barbf{x}$ is of the form $\barbf{x} = (\bar{x},\ldots,\bar{x})\in\Hilbert^{n}$ and $\bar{y}=L\bar{x}$. Taking the limit along a subsequence of $(\mathbf{x}^k,y^k)_{k\in\mathbb{N}}$ which converges weakly to $(\barbf{x},\bar{y})$ and using demiclosedness of $C$, equations~\eqref{eq:demiclosed2} and~\eqref{eq:operator s} yield the expression
\begin{equation*}
\left\{
    \begin{array}{l}
    \bar{z}_1-\bar{x} \in A_1(\bar{x}),  \\
    \bar{z}_i-\bar{z}_{i-1} \in A_i(\bar{x}), \quad \forall i\in{\llbracket 2,n-1\rrbracket}, \\
    \bar{x}-\bar{z}_{n-1}-L^*(\gamma L \bar{x}-\bar{v})\in A_n(\bar{x}) , \\
    \gamma L\bar{x}-\bar{v}  \in B (L\bar{x}), 
    \end{array}
\right.
\end{equation*}
which, by summing the first $n$ equations, implies that $(\bar{x},\gamma L\bar{x}-\bar{v})\in\mathbf{Z}$ with $\bar{x} = J_{A_1}(\bar{z}_1)$. In particular, we have shown that $(\barbf{x},\bar{y})$  is directly obtained from $\barbf{z}$, implying that it is the unique weak sequential cluster point of the bounded sequence $(\mathbf{x}^k,y^k)_{k\in\mathbb{N}}$. Thus, the full sequence converges weakly to this point.

(\ref{it:conv-iii}) From~\eqref{it:conv-i}-\eqref{it:conv-ii}, for all $i\in{\llbracket 1,n\rrbracket}$, we deduce that the sequence $(\gamma Lx_i^k-v^k)_{k\in\mathbb{N}}$ weakly converges to $\gamma L\bar{x}-\bar{v}$, which belongs to $\Dual$ since $(\bar{x},\gamma L\bar{x}-\bar{v})\in\mathbf{Z}$.
\end{proof}
\begin{remark}[Malitsky--Tam resolvent splitting~\cite{malitsky2021resolvent} as a special case]
Consider Problem~\eqref{eq:primal}-\eqref{eq:dual} in the particular case in which $L=\Id$. Then, $B:\Hilbert\setto\Hilbert$ and equation~\eqref{eq:primal} becomes the classical monotone inclusion problem with $(n+1)$-operators. Furthermore, by setting $\gamma=1$ in Theorem~\ref{t:conv}, it is straightforward to see that the sequences in~\eqref{eq:zkvk}-\eqref{eq:xkyk} yield the Malitsky--Tam resolvent splitting with minimal lifting for $(n+1)$-operators.
\end{remark}
\begin{remark}[On the parameter $\gamma$ in the definition of the norm $\| \cdot\|_{\gamma}$]\label{remark:gamma}
In Lemma~\ref{lema:averaged}, we proved that the operator $T$ is $\lambda$-averaged with respect to the norm $\| \cdot \|_{\gamma}$ induced by the scalar product defined in~\eqref{eq:normgamma}. Although the use of this norm did not require detours from the usual procedure to prove convergence of the fixed point algorithm in Theorem~\ref{t:conv}, it may numerically affect the performance of the algorithm. To give an intuition about this, consider the norm of the \emph{sequence of residuals} $\left(\|(\mathbf{z}^{k+1},v^{k+1})-(\mathbf{z}^k,v^k)\|_{\gamma}\right)_{k\in\mathbb{N}}$, which converges to 0 as the algorithm reaches a fixed point, and note that we have
\begin{equation*}
\left\|(\mathbf{z}^{k+1},v^{k+1})-(\mathbf{z}^k,v^k)\right\|^2_{\gamma}  = \|\mathbf{z}^{k+1}-\mathbf{z}^k\|^2 + \frac{1}{\gamma} \|v^{k+1}-v^k\|^2 \quad \forall k\geq 0.
\end{equation*}
Lemma~\ref{lema:averaged} implies that this sequence is monotone decreasing, but if $\gamma$ is very small, the weight of the sequence of dual variables $(v^{k+1}- v^k)_{k\in\mathbb{N}}$ in the norm would be much larger than the one of the sequence of primal variables $(\mathbf{z}^{k+1}-\mathbf{z}^k)_{k\in\mathbb{N}}$, so a small decrease in the value of $\|v^{k+1}- v^k\|$  will readily imply a decrease of the norm of the sequence of residuals even if $\|\mathbf{z}^{k+1}-\mathbf{z}^k\|$ does not diminish much. Because of that, a larger number of iterations might be needed to achieve convergence of the primal sequence, which can slow down the overall convergence of the algorithm. Nonetheless, it is possible to perform some sort of pre-conditioning to prevent from having a large constant in the definition of the norm. We will further comment on this in the numerical experiment in Section~\ref{sect:numerics}.
\end{remark}
A standard product space reformulation permits to extend our method to the more general inclusion Problem~\ref{problem:PD}, which has finitely many linearly composed maximally monotone operators. We detail this in the following corollary, while the resulting scheme is displayed in Algorithm~\ref{alg:liftedPD}.
\begin{algorithm}[!ht]
\caption{Primal-dual splitting for Problem~\ref{problem:PD} with $(n-1,m)$-lifting, with $n\geq 2$.}\label{alg:liftedPD}
\begin{algorithmic}[1]
\Require{$\lambda\in{]0,1[}$ and $\gamma\in{\left]0,1/\sum_{j=1}^m \|L_j\|^2\right]}$.}
\State{Choose $\mathbf{z}^0 = (z_1^0,\ldots,z_{n-1}^0)\in\Hilbert^{n-1}$ and $\mathbf{v}^0 = (v_1^0,\ldots,v_m^0)\in\Gilbert_1\times\cdots\times\Gilbert_m$.}
\For{$k=0,1,\dots$}
\State{Compute 
\begin{equation}\label{eq:zkvkalg}
  \colvec{\mathbf{z}^{k+1} \\ \mathbf{v}^{k+1}} = \colvec{\mathbf{z}^{k} \\ \mathbf{v}^{k}} + \lambda
\colvec{ x_2^k-x_1^k \\ x_3^k-x_2^k \\ \vdots \\ x_n^k -x_{n-1}^k \\ \gamma (y_1^k - L_1x_n^k)  \\ \vdots \\  \gamma (y_m^k - L_m x_n^k)},
\end{equation}
      \hspace*{\algorithmicindent}with $\mathbf{x}^k=(x_1^k,\ldots,x_n^k)\in\Hilbert^n$ and $\mathbf{y}^k=(y_1^k,\ldots,y_m^k)\in\Gilbert_1\times\cdots\times\Gilbert_m$ computed as}
\begin{equation}\label{eq:xkykalg}
 \left\{
 \begin{aligned}
 x_1^k & = J_{A_1}(z_1^k),\\
 x_i^k & = J_{A_i}(z_i^k+x_{i-1}^k-z_{i-1}) \quad \forall i\in{\llbracket 2,n-1\rrbracket}, \\
 x_n^k &= J_{A_n}\left(x_1^k+x_{n-1}^k-z_{n-1}^k -\sum_{j=1}^m L_j^*(\gamma L_j x_1^k - v_j^k)\right), \\
 y_j^k &= J_{B_j/\gamma} \left(L_j(x_1^k+x_n^k) - \frac{v^k_j}{\gamma}\right) \quad \forall j\in{\llbracket 1,m\rrbracket}.
 \end{aligned}
 \right.
\end{equation}
  \EndFor
\end{algorithmic}
\end{algorithm}
\begin{corollary}\label{cor:multiple}
Let $n\geq 2$ and assume that Problem~\ref{problem:PD} has a solution. Let $\lambda\in{]0,1[}$ and $\gamma \in{\left]0,1/ \sum_{j=1}^m \|L_j\|^2 \right]}$. Given some initial points $\mathbf{z}^0= (z_1,\ldots,z_{n-1})\in\Hilbert^{n-1}$ and $\mathbf{v}^0 = (v_1^0,\ldots,v_m^0)\in\Gilbert_1\times\cdots\times\Gilbert_{m}$, consider the sequences $(\mathbf{z}^k,\mathbf{v}^k)_{k\in\mathbb{N}}$ and $(\mathbf{x}^k,\mathbf{y}^k)_{k\in\mathbb{N}}$ generated by Algorithm~\ref{alg:liftedPD}.
 Then, the following assertions hold:
 \begin{enumerate}[(i)]
 \item The sequence $(\mathbf{z}^k,\mathbf{v}^k)_{k\in\mathbb{N}}$ converges weakly to a point $(\barbf{z},\barbf{v})\in\Hilbert^{n-1}\times\Gilbert_1\times\cdots\times\Gilbert_m$.
 \item The sequence $(x_1^k, \ldots, x_n^k,y_1^k,\ldots,y_m^k)_{k\in\mathbb{N}}$ converges weakly to $(\bar{x},\ldots,\bar{x},L_1\bar{x},\ldots,L_m\bar{x})$ with $\bar{x}\in\Hilbert$ solving the primal inclusion~\eqref{eq:primalintroBj}.
 \item For all $i\in{\llbracket 1,n\rrbracket}$, the sequence $(\gamma L_1 x_i^k-v_1^k,\ldots,\gamma L_m x_i^k-v_m^k)_{k\in\mathbb{N}}$  converges weakly to $(\gamma L_1\bar{x}-\bar{v}_1, \ldots, \gamma L_m\bar{x}-\bar{v}_m)$, which solves the dual inclusion~\eqref{eq:dualintroBj}.
 \end{enumerate}
\end{corollary}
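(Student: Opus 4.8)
The plan is to reduce the multi-operator Problem~\ref{problem:PD} (with $m\geq 1$ linear compositions) to the single-composition case already treated in Theorem~\ref{t:conv} by means of a standard product space reformulation, and then simply translate the conclusions back. First I would define the product Hilbert space $\Gilbert := \Gilbert_1\times\cdots\times\Gilbert_m$ equipped with its natural inner product, and introduce the operators
\begin{equation*}
    B := B_1\times\cdots\times B_m:\Gilbert\setto\Gilbert, \qquad L := (L_1,\ldots,L_m):\Hilbert\to\Gilbert,
\end{equation*}
so that $L x = (L_1x,\ldots,L_mx)$ and $L^*(u_1,\ldots,u_m) = \sum_{j=1}^m L_j^*u_j$. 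Since each $B_j$ is maximally monotone, so is $B$, and its resolvent decomposes as $J_{B/\gamma}(u_1,\ldots,u_m) = (J_{B_1/\gamma}(u_1),\ldots,J_{B_m/\gamma}(u_m))$, which is precisely how the components $y_j^k$ are computed in~\eqref{eq:xkykalg}. With these identifications, the primal inclusion~\eqref{eq:primalintroBj} becomes $0\in\sum_{i=1}^n A_i(x) + L^*B(Lx)$, i.e.\ exactly~\eqref{eq:primal}, and the dual~\eqref{eq:dualintroBj} becomes~\eqref{eq:dual}.

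Next I would verify that the iteration of Algorithm~\ref{alg:liftedPD} is literally the iteration~\eqref{eq:zkvk}--\eqref{eq:xkyk} of Theorem~\ref{t:conv} written out componentwise under this reformulation. The dual variable $v$ of the theorem is the vector $\mathbf{v} = (v_1,\ldots,v_m)\in\Gilbert$; the update of the last block in~\eqref{eq:zkvkalg}, namely $v_j^{k+1} = v_j^k + \lambda\gamma(y_j^k - L_jx_n^k)$ for each $j$, is the single update $v^{k+1} = v^k + \lambda\gamma(y^k - Lx_n^k)$ since $Lx_n^k = (L_1x_n^k,\ldots,L_mx_n^k)$; and the term $\sum_{j=1}^m L_j^*(\gamma L_jx_1^k - v_j^k)$ appearing in the computation of $x_n^k$ in~\eqref{eq:xkykalg} equals $L^*(\gamma Lx_1^k - v^k)$. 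Thus the two schemes coincide.

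The only genuine point requiring care is the step-size condition. The operator norm of the reformulated $L$ satisfies $\|Lx\|^2 = \sum_{j=1}^m \|L_jx\|^2 \leq \bigl(\sum_{j=1}^m\|L_j\|^2\bigr)\|x\|^2$, hence $\|L\|^2 \leq \sum_{j=1}^m\|L_j\|^2$. Consequently the admissible range $\gamma\in\bigl]0, 1/\sum_{j=1}^m\|L_j\|^2\bigr]$ imposed in the corollary is contained in $\bigl]0, 1/\|L\|^2\bigr]$, so the hypotheses of Theorem~\ref{t:conv} are met. I would also note that the solvability assumption on Problem~\ref{problem:PD} gives $\mathbf{Z}\neq\emptyset$, equivalently $\zer\bigl(\sum_{i=1}^n A_i + L^*BL\bigr)\neq\emptyset$, which is the remaining hypothesis of the theorem.

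Finally, I would invoke Theorem~\ref{t:conv} directly. Its part~\eqref{it:conv-i} yields weak convergence of $(\mathbf{z}^k,v^k)_{k\in\mathbb{N}}$ to a fixed point $(\barbf{z},\bar v)\in\Hilbert^{n-1}\times\Gilbert$, which unpacks into assertion~(i) with $\barbf{v} = (\bar v_1,\ldots,\bar v_m)$. Part~\eqref{it:conv-ii} gives weak convergence of $(x_1^k,\ldots,x_n^k,y^k)$ to $(\bar x,\ldots,\bar x,L\bar x)$ with $\bar x$ solving~\eqref{eq:primal}; reading the $\Gilbert$-valued limit $L\bar x$ componentwise as $(L_1\bar x,\ldots,L_m\bar x)$ produces assertion~(ii). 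Part~\eqref{it:conv-iii} gives weak convergence of $(\gamma Lx_i^k - v^k)$ to $\gamma L\bar x - \bar v\in\Dual$ for each $i$; reading this componentwise yields assertion~(iii), since $\gamma L_j x_i^k - v_j^k$ converges weakly to $\gamma L_j\bar x - \bar v_j$ and the limit solves~\eqref{eq:dualintroBj}. I expect the main (and only) subtlety to be the step-size bookkeeping through the norm inequality $\|L\|^2\leq\sum_{j=1}^m\|L_j\|^2$; everything else is a transparent componentwise rewriting of an already-proved result.
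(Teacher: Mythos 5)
Your proposal is correct and follows essentially the same route as the paper: a product space reformulation with $\mathbf{B}:=\bigtimes_{j=1}^m B_j$ and $\mathbf{L}:=\bigtimes_{j=1}^m L_j$, followed by a direct application of Theorem~\ref{t:conv}. If anything, you are slightly more careful than the paper on the step-size bookkeeping, since in general only $\|\mathbf{L}\|^2\leq\sum_{j=1}^m\|L_j\|^2$ holds (the paper states an equality), and your inequality is exactly what is needed for the admissible range of $\gamma$.
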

\begin{proof}
Just note that Problem~\ref{problem:PD} can be reformulated as an instance of Problem~\eqref{eq:primal}-\eqref{eq:dual} by replacing $B$ by the  operator $\mathbf{B}: \Gilbert_1\times\cdots\times\Gilbert_m\setto\Gilbert_1\times\cdots\times\Gilbert_m$ defined as the cartesian product $\mathbf{B} :=\bigtimes_{j=1}^m B_j$ and $L$ by the linear operator $\mathbf{L}:=\bigtimes_{j=1}^m L_j$. In particular, $\|\mathbf{L}\|^2  =\sum_{j=1}^n\|L_j\|^2$  and its adjoint operator  is $\mathbf{L}^*:\Gilbert_1\times\cdots\times\Gilbert_m\to\Hilbert:(v_1,\ldots,v_m)\to\sum_{j=1}^m L_j^* v_j$. Hence, the result follows by considering the averaged nonexpansive operator $T$ in~\eqref{eq:defT} for this choice of operators and applying Theorem~\ref{t:conv}.
\end{proof}

\subsection{Minimality for primal-dual parametrized resolvent splitting}\label{sect:minimal}

In this section, we adapt the concept of lifted splitting to primal-dual algorithms. First, we extend the definition of fixed point encoding to englobe primal-dual problems. As in Section~\ref{subsect:resolventsplitting}, we denote by $T$ a fixed point operator and by $S$ a solution operator, both parametrized by the maximally monotone operators as well as the linear and adjoint operators appearing in Problem~\ref{problem:PD}.
\begin{definition}[Fixed point encoding]
A pair of operators $(T,S)$ is a \emph{fixed point encoding} for Problem~\ref{problem:PD} if, for all particular instance of the problem,
\begin{equation*}
\Fix{T}\neq \emptyset \Longleftrightarrow \zer{\left(\sum_{i=1}^n A_i+ \sum_{j=1}^m L_j^*B_jL_j\right)}\neq\emptyset \text{ and } \mathbf{z}\in\Fix{T} \Longrightarrow S(\mathbf{z}) \in\mathbf{Z},
\end{equation*}
where we recall that $\mathbf{Z}$ denotes the set of primal-dual solutions of the problem.
\end{definition}
When talking about lifting for primal-dual problems, the need to distinguish between variables in the space of primal solutions and dual solutions arises. This motivates the following definition.
\begin{definition} (Primal-dual lifting)\label{def:pdlifting}
Let $d,f\in\mathbb{N}$. A fixed point encoding $(T,S)$ is a $(d,f)$-\emph{fold lifting} for Problem~\ref{problem:PD} if
\begin{equation*}
T:\Hilbert^{d}\times\Gilbert_{1}^{f_1}\times\cdots\times\Gilbert_m^{f_m} \to\Hilbert^{d}\times\Gilbert_{1}^{f_1}\times\cdots\times\Gilbert_m^{f_m} \end{equation*}
and
\begin{equation*}
S:\Hilbert^{d}\times\Gilbert_{1}^{f_1}\times\cdots\times\Gilbert_m^{f_m}\to\Hilbert\times\Gilbert_1\times\cdots\times\Gilbert_{m},
\end{equation*}
 where $f_j\geq 0 $ for all $i\in{\llbracket 1,m\rrbracket}$ and $f = \sum_{j=1}^m f_j$. We adopt the convention that the space $G_j$ vanishes from the equation when $f_j=0$.
\end{definition}
The need to control the Lipschitz constants of the linear operators requires the introduction of parameters in the resolvents of the maximally monotone operators. This motivates the definition of parametrized resolvent splitting introduced in Section~\ref{subsect:resolventsplitting}  and which we now adapt to primal-dual splitting algorithms.
\begin{definition}[Parametrized primal-dual resolvent splitting]
A  fixed point encoding $(T,S)$ for Problem~\ref{problem:PD} is a \emph{parametrized primal-dual resolvent splitting} if, for all particular instance of the problem, there is a finite procedure that evaluates $T$ and $S$ at a given point which only uses vector addition, scalar multiplication and the parametrized resolvents of $A_1, \ldots A_n$ and $B_1, \ldots, B_m$.
\end{definition}
\begin{definition}[Frugality]
A parametrized primal-dual resolvent splitting $(T,S)$ for Problem~\ref{problem:PD} is \emph{frugal} if, in addition, each of the parametrized resolvents of $A_1,\ldots,A_n$  and $B_1, \ldots, B_m$ is used exactly once.
\end{definition}
\begin{remark}[On the absence of restrictions on the evaluation of the linear operators]
Since in the finite case, a forward evaluation of a linear operator is computationally equivalent to performing vector addition and scalar multiplication, this suggests that for practical applications there is no computational need to control the number of evaluations of the linear operators in the definition of frugality.
\end{remark}
\begin{example}
Let $n\geq2$ and consider Problem~\ref{problem:PD}. Let $T:\Hilbert^{n-1}\times \Gilbert_1\times\cdots\times \Gilbert_m\to\Hilbert^{n-1}\times \Gilbert_1\times\cdots\times \Gilbert_m$ be the operator defined in~\eqref{eq:defT} by setting $B := \bigtimes_{j=1}^m B_j$ and $L:=\bigtimes_{j=1}^m L_j$. Let $S:\Hilbert^{n-1}\times \Gilbert_1\times\cdots\times \Gilbert_m\to\Hilbert\times \Gilbert_1\times\cdots\times \Gilbert_m$ be defined as
\begin{equation*}
S\colvec{\mathbf{z}\\ \mathbf{v}} := \colvec{ J_{A_1}(z_1) \\ \gamma L_1 J_{A_1}(z_1) - v_1 \\\vdots \\ \gamma L_m J_{A_1}(z_1) - v_m }.
\end{equation*}
Then, by Lemma~\ref{lema:ZFix} and Corollary~\ref{cor:multiple}, the pair $(T,S)$ is a frugal parametrized resolvent splitting with $(n-1,m)$-fold lifting.
\end{example}
The following result shows that the lifting of Algorithm~\ref{alg:liftedPD} is minimal among frugal primal-dual parametrized resolvent splitting algorithms with $m$ dual variables.
\begin{theorem}[Minimality theorem for frugal parametrized splitting]
Let $(T,S)$ be a frugal  primal-dual parametrized resolvent splitting for Problem~\ref{problem:PD} with $(d,m)$-fold lifting. Then, if $n\geq 2$, necessarily $d\geq n-1$.
\end{theorem}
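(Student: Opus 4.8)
The plan is to reduce the claim to the minimal lifting result already established for the pure monotone inclusion, namely Theorem~\ref{th:minimallifting}; this is precisely the reason that theorem was proved for \emph{parametrized} resolvent splittings rather than ordinary ones. First I would exploit the fact that a fixed point encoding $(T,S)$ for Problem~\ref{problem:PD} must be valid for \emph{every} particular instance, and specialize to the degenerate instances in which each dual space is trivial, i.e.\ $\Gilbert_j=\{0\}$ for all $j\in\llbracket 1,m\rrbracket$. For such an instance one necessarily has $L_j=0$, the operator $B_j$ is the (maximally monotone) zero map on $\{0\}$, and hence $\sum_{j=1}^m L_j^*B_jL_j=0$, so that Problem~\ref{problem:PD} collapses onto the $n$-operator inclusion Problem~\ref{eq:monotoneinclusion}. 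Correspondingly, the primal-dual solution set degenerates to $\mathbf{Z}\cong\zer\bigl(\sum_{i=1}^nA_i\bigr)\times\{0\}^m$.

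Next I would track the behaviour of $T$ and $S$ under this specialization. Since $\Gilbert_j^{f_j}=\{0\}$ whenever $\Gilbert_j=\{0\}$, the dual coordinates of the $(d,m)$-fold lifting vanish, so that $T$ restricts to a map $T':\Hilbert^d\to\Hilbert^d$ and $S$ restricts to $S':\Hilbert^d\to\Hilbert$. The finite evaluation procedure defining $(T,S)$ uses each parametrized resolvent of $B_1,\ldots,B_m$ exactly once; but on the trivial space every such resolvent is the unique map $\{0\}\to\{0\}$, whose output is the zero vector and therefore contributes nothing to the affine combinations feeding the remaining operations. Consequently the induced procedure computing $(T',S')$ uses only vector addition, scalar multiplication and the parametrized resolvents of $A_1,\ldots,A_n$, each exactly once. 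Combining this with the collapse $\mathbf{Z}\cong\zer(\sum_iA_i)\times\{0\}^m$, one checks that $(T',S')$ inherits the fixed point encoding property for Problem~\ref{eq:monotoneinclusion}: $\Fix T'\neq\emptyset$ if and only if $\zer(\sum_iA_i)\neq\emptyset$, and $\mathbf{z}\in\Fix T'$ forces $S'(\mathbf{z})\in\zer(\sum_iA_i)$.

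Thus $(T',S')$ is a frugal parametrized resolvent splitting for Problem~\ref{eq:monotoneinclusion} with $d$-fold lifting, and since $n\geq 2$, Theorem~\ref{th:minimallifting} immediately yields $d\geq n-1$, which is the desired conclusion.

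The step requiring the most care, and the main obstacle, is verifying rigorously that the specialization genuinely produces a \emph{frugal} splitting on $\Hilbert^d$ for the reduced problem: one must argue that the trivial dual resolvents can be erased from the evaluation procedure without altering the count of primal resolvent evaluations or breaking the fixed point encoding, and that passing to the degenerate spaces $\Gilbert_j=\{0\}$ is a legitimate instance of Problem~\ref{problem:PD}. Should one prefer to avoid degenerate spaces altogether, the alternative is to replay the algebraic argument underlying Theorem~\ref{th:minimallifting} directly in the primal-dual product space, showing that the $m$ dual coordinates cannot carry the information stored in the primal chain $x_1,\ldots,x_n$; this is more laborious but sidesteps the trivial-space technicality.
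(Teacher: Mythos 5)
Your reduction is correct, but it runs in the opposite direction to the one used in the paper. The paper also argues by specialization to an instance of Problem~\ref{problem:PD} and then invokes Theorem~\ref{th:minimallifting}, but it chooses $\Gilbert_j=\Hilbert$ and $L_j=\Id$ for all $j$, so that the primal--dual problem becomes the classical inclusion with $n+m$ operators and $(T,S)$ becomes a frugal parametrized resolvent splitting with $(d+m)$-fold lifting; the bound $d+m\geq n+m-1$ then gives $d\geq n-1$. You instead collapse the dual spaces to $\Gilbert_j=\{0\}$, so the problem degenerates to the $n$-operator inclusion and the splitting restricts to a $d$-fold lifted one, to which Theorem~\ref{th:minimallifting} applies directly. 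Both arguments are sound and both rest on the same key ingredient (the parametrized extension of the Malitsky--Tam bound). The trade-offs are as you anticipate: the paper's choice avoids any appeal to the trivial Hilbert space and requires essentially no verification beyond counting operators and lifting coordinates, whereas your route needs the (legitimate but slightly fussy) check that $\{0\}$ is an admissible $\Gilbert_j$, that the zero operator on it is maximally monotone, and that the trivial resolvents of the $B_j$ can be erased from the evaluation procedure without affecting frugality. In exchange, your argument is insensitive to the number of dual coordinates: it yields $d\geq n-1$ for any $(d,f)$-fold lifting, not only for $f=m$, whereas the paper's bookkeeping gives $d\geq n-1+(m-f)$, which is weaker when $f>m$. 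Either proof is acceptable; if you keep yours, spell out the frugality-preservation step you flagged, since that is the only place a referee could object.
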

\begin{proof}
By way of contradiction, let $(T,S)$ be a frugal parametrized primal-dual resolvent splitting for Problem~\ref{problem:PD} with $(d,m)$ fold lifting and $d < n-1$. Consider the instance of the problem in which $L_j =\Id :\Hilbert \to \Hilbert$ for all $j\in{\llbracket1,m\rrbracket}$. Then, Problem~\ref{problem:PD} becomes the classical monotone inclusion problem with $n+m$ operators and $(T,S)$ is a frugal resolvent splitting with $(d+m)$-fold lifting for such problem with $d+m < n+m-1$, which contradicts Theorem~\ref{th:minimallifting}.
\end{proof}
Finally, we conclude this section by highlighting that Algorithm~\ref{alg:liftedPD} can be applied with $n<2$, by setting $A_i=0$ if required. However, a reduction in the lifting is not obtained in this case.
\begin{remark}[Algorithm~\ref{alg:liftedPD} when $n\leq1$]
Consider Algorithm~\ref{alg:liftedPD}  applied to Problem~\ref{problem:PD} with $n\leq1$. We distinguish the two cases:
\begin{enumerate}[(i)]
\item If $n=1$, then Algorithm~\ref{alg:liftedPD} has $(1,m)$-lifting. Indeed, equations~\eqref{eq:zkvkalg} and~\eqref{eq:xkykalg} become
\begin{equation}
  \colvec{z^{k+1} \\ \mathbf{v}^{k+1}} = \colvec{z^{k} \\ \mathbf{v}^{k}} + \lambda
\colvec{ x^k-z^k \\ \gamma (y_1^k - L_1x^k)  \\ \vdots \\  \gamma (y_m^k - L_m x^k)} \quad \forall k\geq 0,
\end{equation}
 and
\begin{equation}\label{eq:xkykalg2}
 \left\{
 \begin{aligned}
 x^k &= J_{A_1}\left(z^k -\sum_{j=1}^m L_j^*(\gamma L_j z^k - v_j^k)\right), \\
 y_j^k &= J_{B_j/\gamma} \left(L_j(z^k+x^k) - \frac{v^k_j}{\gamma}\right), \quad \forall j\in{\llbracket 1,m\rrbracket},
 \end{aligned}
 \right.
\end{equation}
respectively. This means that, in contrast with what happens when $n\geq2$, there is no reduction in the lifting with respect to the number of operators involved.
\item If $n=0$, the scheme also has $(1,m)$-lifting. In fact, the scheme is the  same as in the previous case but substituting $J_{A_1}$ by $\Id$ in~\eqref{eq:xkykalg2}. Note that this is also the lifting obtained by the already known algorithms in the literature applied to this case.
\end{enumerate}
\end{remark}

\section{Numerical experiments}\label{sect:numerics}

In this section, we test our algorithm for solving an ill-conditioned linear inverse problem which arises in image deblurring and denoising.
Let $b\in\mathbb{R}^n$ be an observed blurred and noisy image of size $M \times N$, with $n=MN$ for grayscale and $n=3MN$ for color images, and denote by $A\in\mathbb{R}^{n\times n}$  the blur operator. The problem can be tackled by means of the regularized convex non-differentiable  problem
\begin{equation}\label{eq:deblur1}
\inf_{s \in \mathbb{R}^n} \left\{ \|A s- b\|_1 + \alpha_1 \|Ws\|_1 + \alpha_2  TV(s) + \delta_{[0,1]^n}(s) \right\},
\end{equation}
where $\alpha_1,\alpha_2 >0$ are regularization parameters, $\delta_{[0,1]^n}$ denotes the indicator function of the set $[0,1]^n$, $TV:\mathbb{R}^n\to\mathbb{R}$ is the discrete isotropic total variation function and  $W$ is the linear  operator given by the normalized \emph{nonstandard Haar transform}~\cite{wavelets}.

Recalling Remark~\ref{remark:gamma}, it is of interest to consider a mechanism which allows tuning the parameter $\gamma$ appearing in the definition of the norm given by the inner product in~\eqref{eq:normgamma} to an appropriate value. To this aim, we perform in~\eqref{eq:deblur1} a change of variable of the form $s=\mu x$, with $\mu >0$, and instead handle the problem
\begin{equation}\label{eq:deblur1.5}
\inf_{x \in \mathbb{R}^n} \left\{ \mu \left\|A x- \frac{b}{\mu}\right\|_1 + \alpha_1\mu \|Wx\|_1 + \alpha_2  TV(\mu x) + \delta_{[0,1/\mu]^n}(x) \right\}.
\end{equation}
Below we will see the way in which the choice of $\mu$ can help setting a suitable parameter~$\gamma$.

The minimization problem in~\eqref{eq:deblur1.5} can be modeled as a composite monotone inclusion problem.
For this, define the operator $L:\mathbb{R}^n\to\mathbb{R}^n\times\mathbb{R}^n : x_{i,j} \to (L_1  x, L_2 x)$ where $L_1$ and $L_2$ are defined component-wise as
\begin{equation}
\begin{aligned}
L_1 x_{i,j} = \left\{
\begin{array}{ll}
\frac{x_{i+1,j} - x_{i,j}}{\mu}, & \text{if } i<M, \\
0,  & \text{otherwise},
\end{array}
\right.
\text{ and }
L_2 x_{i,j} = \left\{
\begin{array}{ll}
\frac{x_{i,j+1} - x_{i,j}}{\mu}, & \text{if } j<N,\\
0, & \text{otherwise}.
\end{array}
\right.
\end{aligned}
\end{equation}
Then the parametrized total variation function can be written as $TV(\mu\;\cdot) = \| L(\cdot)\|_{\times}$, with $\|(p,q)\|_{\times} :=  \sum_{i=1}^m \sum_{j=1}^n \sqrt{p_{i,j}^2 + q_{i,j}^2}$.  Furthermore, an upper bound of the Lipschitz constant of $L$ is given by $\|L\|^2 \leq 8\mu^2 $ (see~\cite{chambolleL} for details).

By~\cite[Proposition~27.5]{bauschke2017}, obtaining a solution to the following problem is equivalent to solving \eqref{eq:deblur1.5}
\begin{equation}\label{eq:deblur2}
\text{find } x\in\mathbb{R}^n \text{ such that } 0 \in \left( N_{[0,1/\mu]^n}+ W^* \circ \partial g_1 \circ W  + A^* \circ \partial g_2 \circ A + L^* \circ \partial g_3 \circ L \right)(x),
\end{equation}
with $g_1: \mathbb{R}^n \to \mathbb{R}$, $g_1(y) = \alpha_1\mu\|y\|_1$, $g_2:\mathbb{R}^n \to \mathbb{R}$, $g_2(y) = \mu\|y - b/\mu\|_1$, $g_3:\mathbb{R}^n \times \mathbb{R}^n \to \mathbb{R}$,
$g_3(p,q) = \alpha_2 \|(p,q)\|_{\times}$, and $N_{[0,1/\mu]^n}$ the normal cone operator to the set ${[0,1/\mu]^n}$.
In order to implement Algorithm~\ref{alg:liftedPD} for solving~\eqref{eq:deblur2}, we need the expression of the following resolvents and proximity operators.
By~\cite[Proposition~23.25~(iii)]{bauschke2017}, the second term in~\eqref{eq:deblur2} is a maximally monotone operator and its resolvent can be expressed as
\begin{equation*}
J_{W^* \circ \partial g_1 \circ W} =  \Id - W^* \circ \left( \Id - \prox_{g_1}\right)\circ W = \Id -  W^*\circ \prox_{g_1^*} \circ W,
\end{equation*}
where $\prox_g=J_{\partial g}$ denotes the proximity operator of a function $g$, and $g_1^*$ is the conjugate function to $g_1$, which is equal to the indicator function $\delta_{[-\alpha_1\mu,\alpha_1\mu]^n}$,    and  thus $\prox_{g_1^*} = P_{[-\alpha_1\mu,\alpha_1\mu]^n}$. Given $\sigma >0$, the proximity operators of $g_2$ and $g_3$ are, respectively,
\begin{equation*}
\prox_{\sigma g_2}(x) = \frac{b}{\mu}+\prox_{\sigma \mu \|\cdot\|_1} \left(x-\frac{b}{\mu}\right) = \frac{b}{\mu} + \sign\left(x-\frac{b}{\mu}\right) \odot \left[\left|x-\frac{b}{\mu}\right| - \sigma \mu \right]_{+},
\end{equation*}
where $\odot$ denotes element-wise product and $[\, \cdot\, ]_+$ and $| \cdot |$ are applied element-wise, and
\begin{equation*}
\begin{aligned}
\prox_{\sigma g_3} &  = \Id - \sigma \prox_{\frac{1}{\sigma} g_3^*} \circ \frac{1}{\sigma} \Id
& = \Id - \sigma P_S \circ \frac{1}{\sigma} \Id,
\end{aligned}
\end{equation*}
since the conjugate function of $g_3$ is $g_3^*:\mathbb{R}^n\times\mathbb{R}^n \to \mathbb{R}^n$, $g_3^* = \delta_S$, with
the set $S$ defined as
\begin{equation*}
S:= \left\{ (p,q)\in\mathbb{R}^n\times\mathbb{R}^n : \max_{1\leq i\leq M, 1\leq j\leq N} \sqrt{p_{i,j}^2 + q_{i,j}^2} \leq \alpha_2 \right\},
\end{equation*}
and the projection operator $P_S:\mathbb{R}^n\times\mathbb{R}^n\to S$ is given component-wise by
\begin{equation*}
(p_{i,j},q_{i,j}) \mapsto \alpha_2 \frac{(p_{i,j},q_{i,j})}{\max{\{\alpha_2, \sqrt{p_{i,j}^2+q_{i,j}^2}\}}}, \quad 1\leq i \leq M, \; 1\leq j\leq N.
\end{equation*}
Hence, when choosing $z^0\in\mathbb{R}^n$, $v_1^0\in\mathbb{R}^n$ and $v_2^0\in\mathbb{R}^n\times\mathbb{R}^n$ as starting values, and letting $\lambda\in{]0,1[}$ and $\gamma\in{\left]0,1/(\|A\|^2+\|L\|^2)\right]}$, the iterative scheme in Algorithm~\ref{alg:liftedPD} becomes
\begin{equation*}
\left\lfloor
\begin{aligned}
x_1^k & = P_{[0,1/\mu]^n} (z^k),  \\
x_2^k & = \left( \Id - W^* \circ P_{[- \alpha_1 \mu,\alpha_1 \mu]^n} \circ W \right) \bigl(2x_1^k-z^k - A^*(\gamma Ax_1^k - v_1^k) - L^*(\gamma Lx_1^k -v_2^k) \bigr), \\
y_1^k & =  \frac{b}{\mu}+\prox_{\frac{\mu}{\gamma} \|\cdot\|_1}\left( A(x_1^k+x_2^k) - \frac{v_1^k}{\gamma} -\frac{b}{\mu}\right), \\
y_2^k & = \left( \Id - \frac{1}{\gamma} P_S\right) \left( \gamma L(x^k_1+x_2^k)- v_2^k\right), \\
z^{k+1} &= z^k + \lambda (x_2^k-x_1^k), \\
v_1^{k+1} &= v_1^k + \lambda \gamma ( y_1^k-A x_2^k), \\
v_2^{k+1} &= v_2^k + \lambda \gamma (y_2^k - L x_2^k).
\end{aligned}
\right.
\end{equation*}
In our experiment, we replicate the problem in~\cite[Section~4.2]{botDR}, where an extensive comparison between different primal-dual algorithms is presented. Since the best performing algorithm is the Douglas--Rachford type primal-dual method in~\cite[Algorithm~3.1]{botDR}, we limit our comparison to this algorithm, whose detailed implementation is given in the cited work. We ran our experiments in Matlab, making use of the inbuilt functions \texttt{fspecial} and \texttt{imfilter} to define an operator $A$ which is a Gaussian blur operator of size $9\times 9$ with standard deviation 4 and reflexive boundary conditions. In particular, $A$ verifies $\|A\|=1$ and $A^*=A$. We employed as observed image $b$ a picture taken at the Sch\"onbrunn Palace Gardens (Vienna) subjected to the already specified blur followed by the addition of a zero-mean Gaussian noise with standard deviation $10^{-3}$ (see Figure~\ref{fig:images}). To test the influence on the performance of the picture size, we resized the original picture to different pixel resolutions (see Table~\ref{tab:comparison}).

When measuring the quality of the restored images, we use the \emph{improvement in signal-to-noise-ratio} (ISNR), which is given by
\begin{equation*}
\mathrm{ISNR}_k = 10\log_{10}\left(\frac{\|x-b\|^2}{\|x-x^k\|^2}\right),
\end{equation*}
where $x$ and $x^k$ are the original and the reconstructed image at iteration $k$, respectively. We tuned the regularization parameters in order to guarantee an adequate ISNR value for the restored images, setting $\alpha_1 := 0.005$ and $\alpha_2:=0.009$.

We recall that the stepsize parameter $\gamma$ of Algorithm~\ref{alg:liftedPD} must be taken in the interval $\gamma \in{\left]0,1/(\|A\|^2+\|L\|^2)\right]}= {\left]0,1/(1+8\mu^2)\right]}$. When $\mu = 1$ (i.e., we solve~\eqref{eq:deblur1}), this interval is $]0,0.111]$. In our numerical experiments we empirically observed that a very small stepsize negatively affects the performance of the algorithm, as mentioned in Remark~\ref{remark:gamma}.  After testing different options, the most convenient one seems to be $\mu = 1/\sqrt{8}$, which implies making the Lipschitz constant of both linear operators in the problem equal to 1.

The initialization of each of the methods was the following:
\begin{itemize}
\item DR1(\cite[Algorithm~3.1]{botDR}): starting points $x_0=b$ and $(v_{1,0},v_{2,0},v_{3,0})=(0,0,0)$, $\sigma_1=1$, $\sigma_2=0.05$, $\sigma_3=0.05$, $\tau = 1(\sigma_1+\sigma_2+8\sigma_3)^{-1}-0.01$, $\lambda_n  =1.5$ for al $n\in\mathbb{N}$.
\item Algorithm~\ref{alg:liftedPD} with $\mu=1$: starting points $z_0 = b$ and $(v_1^0,v_2^0) = (0,0)$, $\lambda = 0.99$ and $\gamma = 1/9$;

\item Algorithm~\ref{alg:liftedPD} with $\mu=1/\sqrt{8}$: starting points $z_0=b/ \mu$ and $(v_1^0,v_2^0) = (0,0)$, $\lambda = 0.99$ and $\gamma = 1/2$.
\end{itemize}

We performed 400 iterations of each of the algorithms and compared the values of the objective function in~\eqref{eq:deblur1.5} and the ISNR with respect to the CPU time, which provides a more realistic comparison than iteration count, since DR1 has a higher computational cost per iteration than Algorithm~\ref{alg:liftedPD}. The tests were run on a desktop of Intel Core i7-4770 CPU 3.40GHz with 32GB RAM, under Windows 10 (64-bit). The algorithms were ran 3 times, once for each of the RGB components of the picture. The evolution in CPU time of adding these 3 values of the objective function and those of the ISNR for the $640\times768$-sized picture are represented in Figure~\ref{fig:plots}, where we observe that Algorithm~\ref{alg:liftedPD} with $\mu=1/\sqrt{8}$  obtains slightly better values than those returned by DR1, but in significantly less time.

\begin{figure}[ht!]\centering
\includegraphics[width=.5\textwidth]{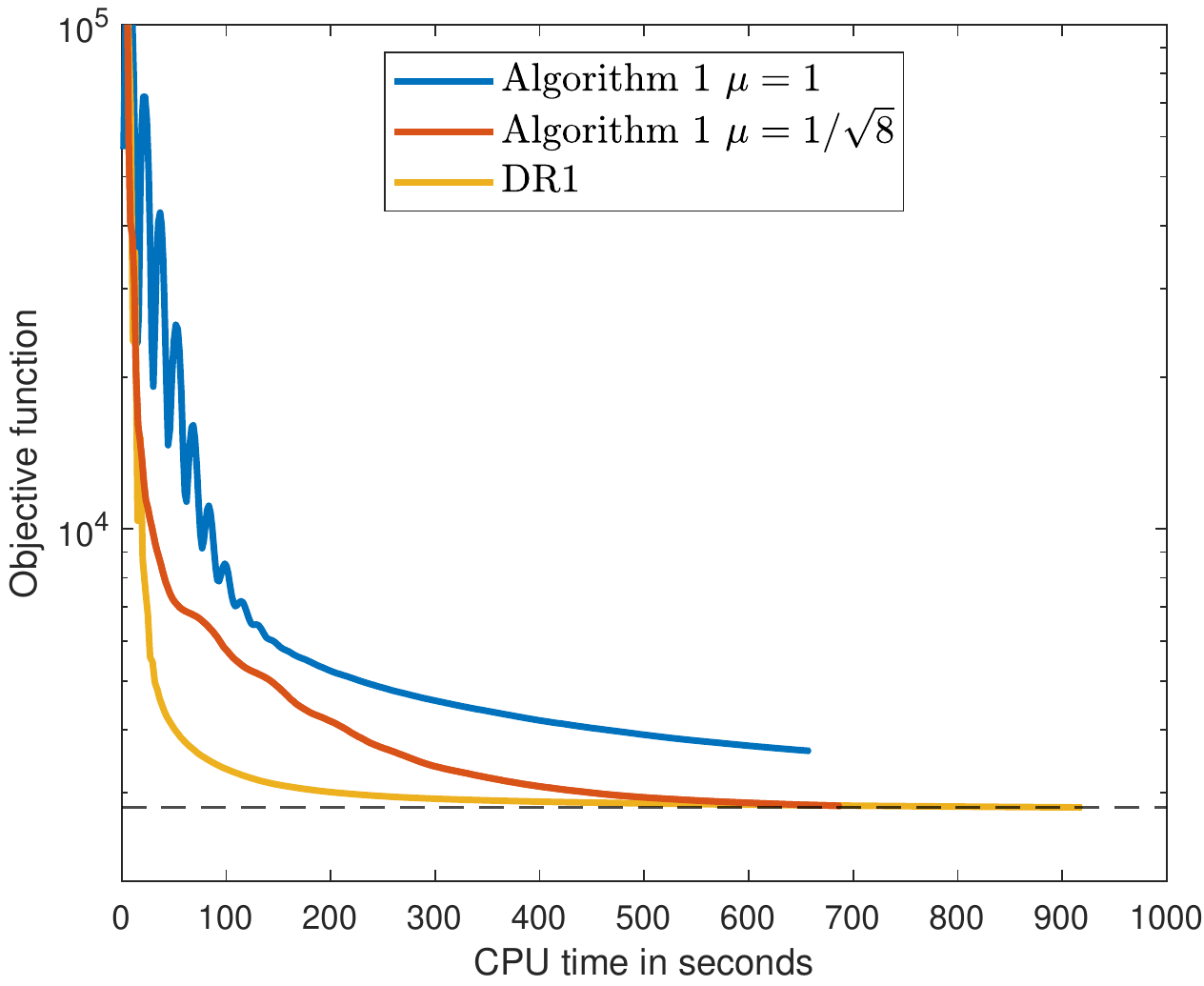}\hfill
\includegraphics[width=.5\textwidth]{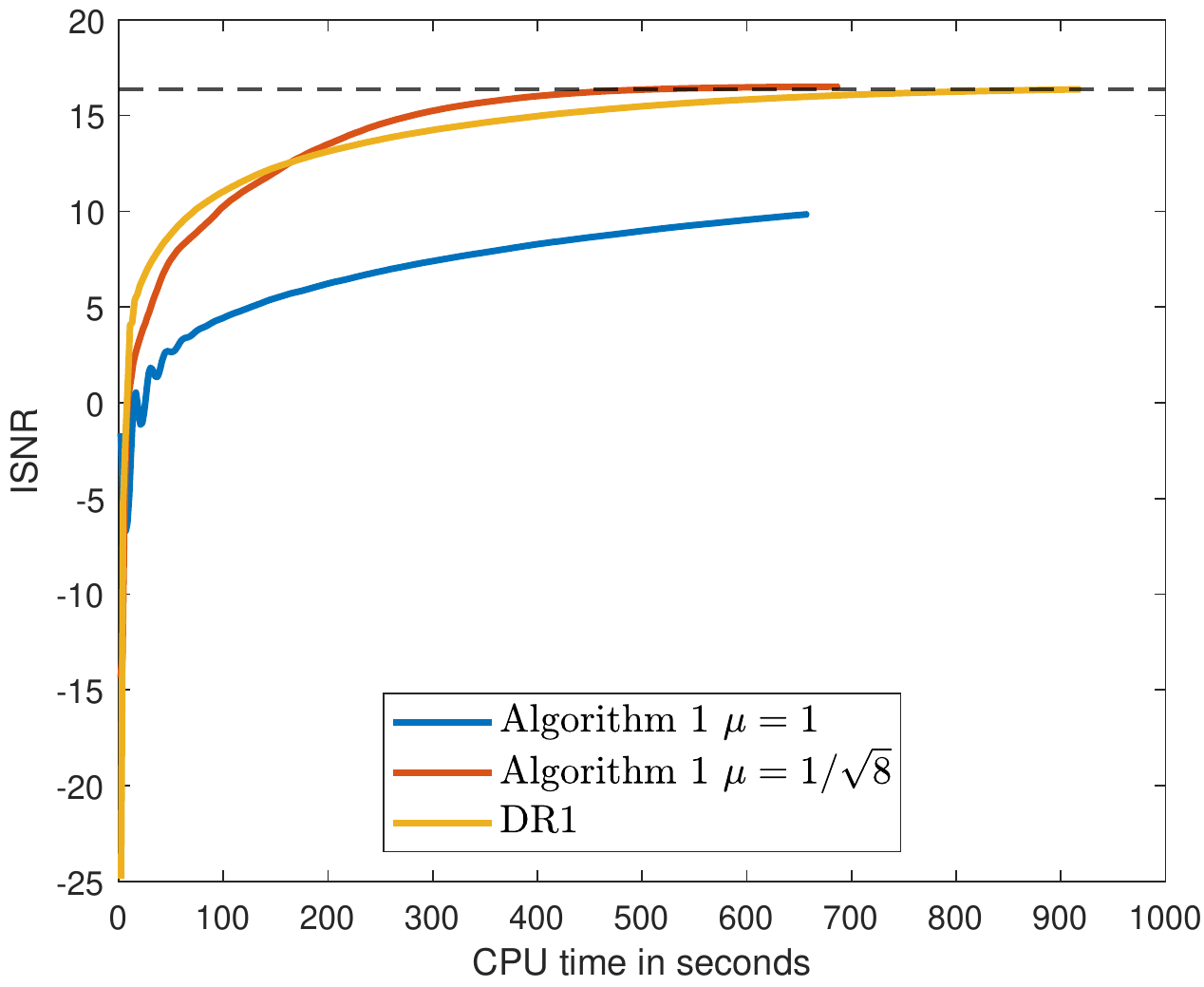}
\caption{The evolution of the values of the objective function and of the ISNR in CPU time for 400 iterations of Algorithm~\ref{alg:liftedPD} with $\mu=1$ and $\mu=1\sqrt{8}$  and DR1, using the $640\times768$ pixels image displayed in Figure~\ref{fig:images}.}\label{fig:plots}
\end{figure}

The restored images are presented in Figure~\ref{fig:images}. There is no much difference between the ones corresponding to Algorithm~\ref{alg:liftedPD} with $\mu=1/\sqrt{8}$ (bottom-middle) and DR1 (bottom-right), but a close look at the image obtained with Algorithm~\ref{alg:liftedPD} with $\mu=1$ permits to observe its worse quality. To show that this trend in the performance of the algorithms is not affected by the image size, we present in Table~\ref{tab:comparison} the results from running the algorithms on the same picture for five different pixel resolutions. Overall, we notice that the CPU time required for computing the 400 iterations is significantly lower for Algorithm~\ref{alg:liftedPD}, as expected. On average, DR1 required 45\% more time than Algorithm~\ref{alg:liftedPD} to compute the 400 iterations, independently of the size of the image. Regarding the parameter $\mu$, Algorithm~\ref{alg:liftedPD} with $\mu=1$ is clearly outperformed by the other two methods, making thus clear the influence that this parameter has on it. The function values obtained were slightly lower for DR1, while the ISNR was slightly lower for Algorithm~\ref{alg:liftedPD} with $\mu=1/\sqrt{8}$, which implies that both algorithms performed similarly with respect to the restored image quality.

\begin{figure}[ht!]\centering
\includegraphics[height=.4\textwidth]{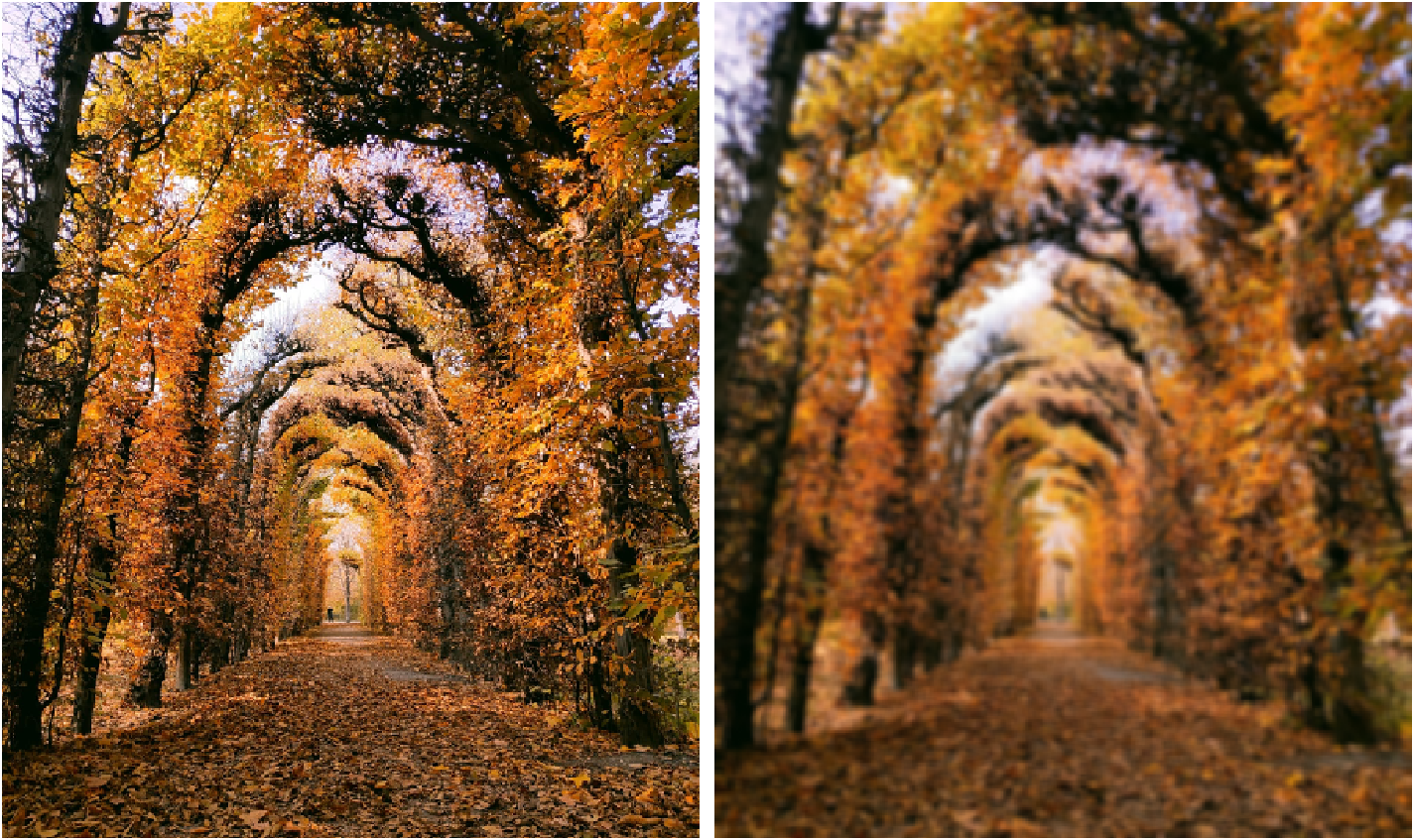}\\
\includegraphics[height=.4\textwidth]{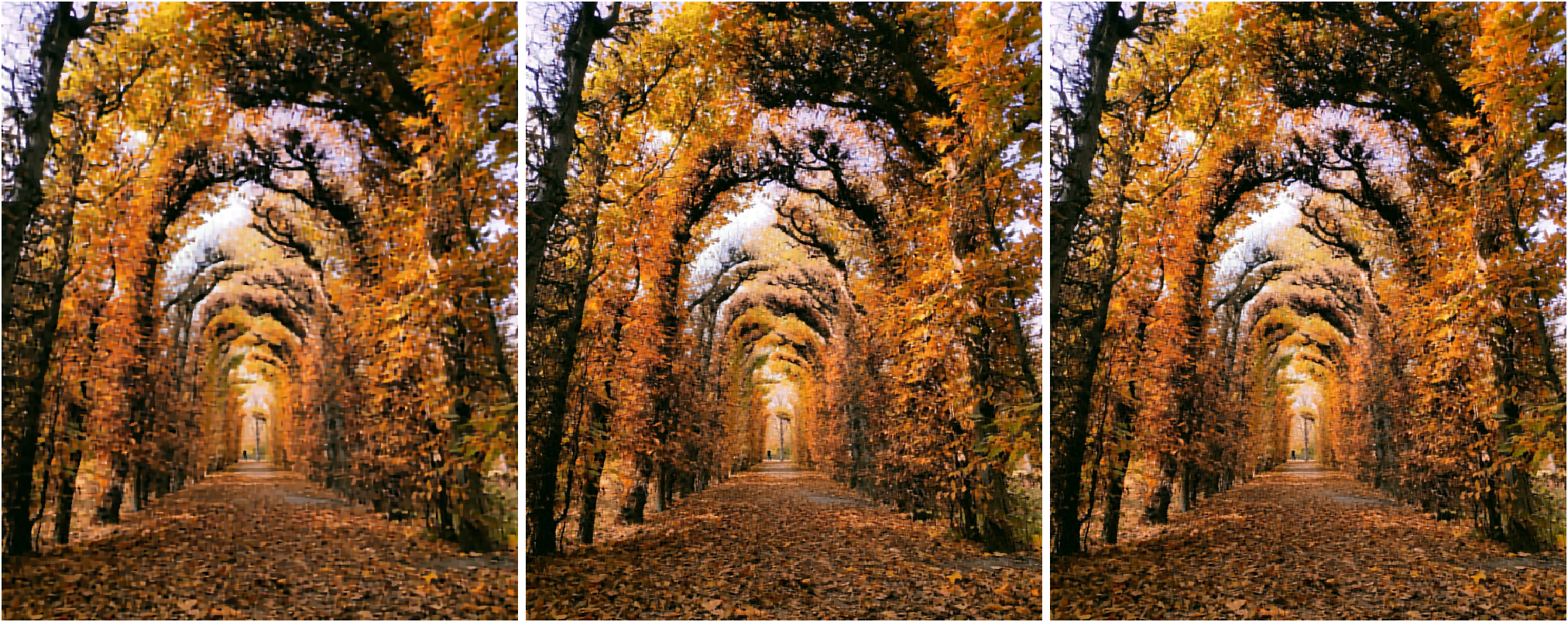}
\caption{On the top, the original $640 \times 768$ pixels image and the blurred and noisy image. On the bottom the images restored after computing 400 iterations of Algorithm~\ref{alg:liftedPD} with $\mu=1$ (left) and $\mu=1/\sqrt{8}$ (middle), and DR1 (right).\label{fig:images}}
\end{figure}

\begin{table}[ht!]\scriptsize\centering
\begin{tabular}{|c|S[table-format=5.1]|S[table-format=5.1]|S[table-format=5.1]||S[table-format=2.1]|S[table-format=2.1]|S[table-format=2.1]||S[table-format=4.1]|S[table-format=4.1]|S[table-format=4.1]|}
\cline{2-10}
\multicolumn{1}{c|}{}&\multicolumn{3}{c||}{Function values} & \multicolumn{3}{c||}{ISNR} & \multicolumn{3}{c|}{CPU time}\\ \cline{1-10}
\multicolumn{1}{|c|}{Resolution}&\multicolumn{1}{c|}{\scriptsize$\mu=1$}& \multicolumn{1}{c|}{\scriptsize$\mu=1/\sqrt{8}$} & \multicolumn{1}{c||}{\scriptsize DR1} & \multicolumn{1}{c|}{\scriptsize$\mu=1$}& \multicolumn{1}{c|}{\scriptsize$\mu=1/\sqrt{8}$} & \multicolumn{1}{c||}{\scriptsize DR1} & \multicolumn{1}{c|}{\scriptsize$\mu=1$}& \multicolumn{1}{c|}{\scriptsize$\mu=1/\sqrt{8}$} & \multicolumn{1}{c|}{\scriptsize DR1}\\ \hline
 $80\times96$  &55.0    &    43.2 &    42.8 & 9.7 & 15.8 & 15.8 &  5.9     & 5.8   & 8.7\\
$160\times 192$ &225.5   &   174.3 &   173.4 & 8.4 & 14.3 & 14.2 & 16.0    &16.2   & 21.1\\
$320\times 384$ &920.3   &   711.2 &   706.0 & 8.7 & 14.9 & 14.8 & 54.7    &51.5   & 74.0\\
$640\times 768$ &3630.3  &  2825.2 &  2804.5 & 9.8 & 16.5 & 16.4 & 294.4   &293.1  & 465.4 \\
$1280\times 1536$ &13084.0 & 10360.0 & 10327.0 &12.8 & 21.0 & 21.0 & 1654.2&1638.5 & 2349.6\\
\hline
\end{tabular}
\caption{Results from running on the picture displayed in Figure~\ref{fig:images} (for various pixel resolutions) 400 iterations of Algorithm~\ref{alg:liftedPD} with $\mu=1$ and $\mu=1/\sqrt{8}$, and DR1.\label{tab:comparison}}
\end{table}

\section{Conclusions and open questions}\label{sect:conclusions}
In this work, we have considered the composite monotone inclusion problem together with its dual counterpart given by Problem~\ref{problem:PD}. We have extended the definition of resolvent splitting given in~\cite{ryu2020} to encompass primal-dual algorithms and the inclusion of parameters in the resolvent and presented a definition of minimal lifting for frugal schemes of this form. We have proposed the first primal-dual algorithm which presents minimal lifting in this sense, and show its good performance with a numerical example.

To conclude, we outline possible directions for further research.

\paragraph{Establishing an optimal criterion for tuning the stepsize $\gamma$:}
We pointed out in Remark~\ref{remark:gamma} the influence that the parameter $\gamma$ can have in the performance of the algorithm. In Section~\ref{sect:numerics} we presented a possibility for controlling this parameter, by making use of a change of variable which modifies the Lipschitz constants of the linear operators, and we empirically showed that it significantly affects the speed of performance of the algorithm. However, there is no guarantee that this strategy is optimal.
It would be interesting to further investigate which is the best way for tuning the value of $\gamma$.

\paragraph{Achieving lifting reduction in the dual variables:} The reduction in the lifting with respect to the number of operators achieved in the algorithm here presented only affects the primal variables. It remains open the question of whether it is possible to reduce the dimension of the underlying space associated to the linearly composed operators. More precisely, if we consider the problem given by
\begin{equation*}
\text{find } x \in \Hilbert \text{ such that  }0 \in\sum_{j=1}^m L^*_j B (L_j x),
\end{equation*}
is it possible to obtain an algorithm for solving this problem with $(0,m-1)$-fold lifting (according to Definition~\ref{def:pdlifting})? Or even with $(1,m-1)$ or $(0,m)$-fold lifting? All these questions remain open.

\paragraph{{\small Acknowledgements}}{\small
\hspace{-2mm}
FJAA and DTB were partially supported by the Ministry of Science, Innovation and Universities of Spain and the European Regional Development Fund (ERDF) of the European Commission, Grant PGC2018-097960-B-C22.
FJAA was partially supported by the Generalitat Valenciana (AICO/2021/165).
RIB was partially supported by FWF (Austrian Science Fund), project P 34922-N.
DTB was supported by MINECO and European Social Fund (PRE2019-090751) under the program ``Ayudas para contratos predoctorales para la formaci\'{o}n de doctores''
2019.

\newpage \appendix

\section{Proof of the minimality theorem for parametrized resolvent splitting}\label{sect:appendix}
Throughout this section, we assume that $n\geq 2$ and we denote by $\mathcal{A}_n$ the set of all $n$-tuples of maximally monotone operators on $\Hilbert$. Hence, an element $A\in\mathcal{A}_n$ is of the form $A = (A_1,\ldots,A_n)$, where $A_i:\Hilbert \setto \Hilbert$ are maximally monotone operators for all $i \in{\llbracket 1,n\rrbracket}$. Every instance of Problem~\ref{eq:monotoneinclusion} is determined by the choice of $A\in\mathcal{A}_n$. In particular, when considering a fixed point encoding for this problem, the fixed point operator and the solution operator are  both parametrized in terms of $A\in\mathcal{A}_n$. To emphasize this idea and to facilitate the exposition, we denote these operators by $T_A$ and $S_A$ in the following.

Let $(T_A,S_A)$ be a $d$-fold lifted frugal parametrized resolvent splitting for Problem~\ref{eq:monotoneinclusion}. By definition, there exists a finite procedure for evaluating $T_A$ and $S_A$ using only vector addition, scalar multiplication and the resolvents $J_{\delta_1 A_1}, \ldots, J_{\delta_n A_n}$ precisely once, where $\delta = (\delta_1,\ldots,\delta_n)^{T}$ is a vector of positive parameters. Following the same reasoning than in~\cite[Section~3]{malitsky2021resolvent}, we can completely describe the evaluation of a point $\mathbf{z} = (z_1,\ldots,z_d)\in\Hilbert^{d}$ by $T_A$ with a series of equations. We directly present them here.
\begin{enumerate}[(i)]
\item There exists $\mathbf{x}= (x_1,\ldots,x_n)\in\Hilbert^{n}$ and $\mathbf{y} = (y_1,\ldots,y_n)\in\Hilbert^{n}$ such that
\begin{equation}\label{eq:Tz1}
 \mathbf{x}= J_{\delta A} (\mathbf{y}) \Longleftrightarrow 0 \in  \mathbf{x} - \mathbf{y} + \delta A(\mathbf{x}),
\end{equation}
where  $\delta A := (\delta_1 A_1, \ldots, \delta_n A_n)\in\mathcal{A}_n$.
\item There exists $Y_z \in \mathbb{R}^{n\times d}$ and a lower-triangular matrix $Y_x \in\mathbb{R}^{n\times n}$ with zeros in the diagonal such that\footnote{Here we make use of an abuse of notation. Indeed~\eqref{eq:Tz2}, should be written as $\mathbf{y} = (Y_z \otimes\Id) \mathbf{z} + (Y_x \otimes \Id) \mathbf{x}$, where $\otimes$ denotes the Kronecker product.}
\begin{equation}\label{eq:Tz2}
\mathbf{y} = Y_z \mathbf{z} + Y_x\mathbf{x}.
\end{equation}
\item By frugality, there exists $T_z\in\mathbb{R}^{d\times d}$ and $T_x\in\mathbb{R}^{d \times n}$ such that
\begin{equation}\label{eq:Tz3}
T_A(\mathbf{z}) = T_z\mathbf{z} + T_x\mathbf{x}.
\end{equation}
\end{enumerate}
Similarly, also by frugality, the evaluation of $\mathbf{z}$ by the solution operator $S$ can be expressed as
\begin{equation}\label{eq:Sz}
S_A(\mathbf{z}) = S_z \mathbf{z} + S_x \mathbf{x},
\end{equation}
where $S_z\in\mathbb{R}^{1\times d}$ and $S_x\in\mathbb{R}^{1\times n}$.

The proof of the next technical lemma can be obtained by following the same steps than in~\cite[Lemma~3.1]{malitsky2021resolvent}, so we do not replicate it here.
\begin{lemma}\label{lemma:M}
Let $(T_A,S_A)$ be a frugal parametrized resolvent splitting for Problem~\ref{eq:monotoneinclusion}. Let $M$ denote the block matrix given by
\begin{equation*}
M :=
\left[
\begin{matrix}
0 &  \Id & -\Id & \delta^{T} \Id \\
Y_z & Y_x & -\Id & 0 \\
T_z-\Id & T_x & 0 & 0
\end{matrix}
\right].
\end{equation*}
If $\mathbf{z}\in\Fix{T_A}$, then there exists $\mathbf{v} = \left[ \mathbf{z}, \mathbf{x}, \mathbf{y}, \mathbf{a} \right]^{T}\in\ker{M}$ with $\mathbf{a}\in A(\mathbf{x})$. Conversely, if $\mathbf{v} = \left[ \mathbf{z}, \mathbf{x}, \mathbf{y}, \mathbf{a} \right]^{T}\in\ker{M}$ and $\mathbf{a}\in A(\mathbf{x})$, then $\mathbf{z}\in\Fix{T_A}$, $\mathbf{x} = J_{\delta A}(\mathbf{y})$ and $S_A(\mathbf{z}) = S_z\mathbf{z} + S_x\mathbf{x}$.
\end{lemma}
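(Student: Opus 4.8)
The plan is to observe that the three block rows of $M$ are nothing but the evaluation equations~\eqref{eq:Tz1}--\eqref{eq:Tz3} rewritten as linear relations among $\mathbf{z}$, $\mathbf{x}$, $\mathbf{y}$ and an element $\mathbf{a}\in A(\mathbf{x})$, so that the whole statement reduces to reading the kernel condition $M\mathbf{v}=0$ one block row at a time. Acting with $M$ on $\mathbf{v}=[\mathbf{z},\mathbf{x},\mathbf{y},\mathbf{a}]^{T}$, the first block row reads $\mathbf{x}-\mathbf{y}+\delta\mathbf{a}=0$ (with $\delta\mathbf{a}=(\delta_1 a_1,\ldots,\delta_n a_n)$), which by the characterization in~\eqref{eq:Tz1} is precisely $\mathbf{x}=J_{\delta A}(\mathbf{y})$ together with $\mathbf{a}\in A(\mathbf{x})$; the second block row reproduces~\eqref{eq:Tz2}, i.e. $\mathbf{y}=Y_z\mathbf{z}+Y_x\mathbf{x}$; and the third block row is $(T_z-\Id)\mathbf{z}+T_x\mathbf{x}=0$, i.e. $T_z\mathbf{z}+T_x\mathbf{x}=\mathbf{z}$.

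For the forward implication I would start from $\mathbf{z}\in\Fix T_A$ and simply run the finite evaluation procedure for $T_A$ at $\mathbf{z}$: this produces $\mathbf{x},\mathbf{y}$ satisfying~\eqref{eq:Tz1} and~\eqref{eq:Tz2}. From $\mathbf{x}=J_{\delta A}(\mathbf{y})$, equivalently $\mathbf{y}\in(\Id+\delta A)(\mathbf{x})$, I extract the vector $\mathbf{a}\in A(\mathbf{x})$ with $\mathbf{y}=\mathbf{x}+\delta\mathbf{a}$, which is exactly the first block row. The second block row holds by~\eqref{eq:Tz2}. Finally, substituting $T_A(\mathbf{z})=T_z\mathbf{z}+T_x\mathbf{x}$ from~\eqref{eq:Tz3} into the fixed point identity $T_A(\mathbf{z})=\mathbf{z}$ yields the third block row. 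Assembling these gives $\mathbf{v}=[\mathbf{z},\mathbf{x},\mathbf{y},\mathbf{a}]^{T}\in\ker M$ with $\mathbf{a}\in A(\mathbf{x})$.

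For the converse I would read the rows in the opposite direction: row one together with $\mathbf{a}\in A(\mathbf{x})$ gives $\mathbf{y}\in(\Id+\delta A)(\mathbf{x})$, hence $\mathbf{x}=J_{\delta A}(\mathbf{y})$, and row two gives $\mathbf{y}=Y_z\mathbf{z}+Y_x\mathbf{x}$. The delicate point, and the step I expect to be the main obstacle, is to argue that the pair $(\mathbf{x},\mathbf{y})$ carried by the kernel vector coincides with the pair the evaluation procedure actually computes from $\mathbf{z}$; only then is $T_z\mathbf{z}+T_x\mathbf{x}$ legitimately equal to $T_A(\mathbf{z})$ via~\eqref{eq:Tz3}, and likewise $S_z\mathbf{z}+S_x\mathbf{x}=S_A(\mathbf{z})$ via~\eqref{eq:Sz}. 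This uniqueness is what makes the representation well posed: because the resolvents are single-valued (Minty's theorem) and $Y_x$ is lower triangular with zero diagonal, the coupled system $\mathbf{x}=J_{\delta A}(\mathbf{y})$, $\mathbf{y}=Y_z\mathbf{z}+Y_x\mathbf{x}$ can be solved by forward substitution, determining $(x_1,y_1),(x_2,y_2),\ldots$ successively and uniquely from $\mathbf{z}$. Hence the $\mathbf{x}$ appearing in any kernel element is forced to equal the computed one, so the third block row $T_z\mathbf{z}+T_x\mathbf{x}=\mathbf{z}$ becomes $T_A(\mathbf{z})=\mathbf{z}$, giving $\mathbf{z}\in\Fix T_A$, while the identities $\mathbf{x}=J_{\delta A}(\mathbf{y})$ and $S_A(\mathbf{z})=S_z\mathbf{z}+S_x\mathbf{x}$ follow immediately.
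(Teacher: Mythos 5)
Your proof is correct and follows essentially the same route as the paper, which defers to the argument of Malitsky--Tam's Lemma~3.1: you read off the three block rows of $M$ as the evaluation equations~\eqref{eq:Tz1}--\eqref{eq:Tz3}, and you correctly identify and justify the one nontrivial point in the converse, namely that single-valuedness of the resolvents together with the strict lower-triangularity of $Y_x$ forces the $(\mathbf{x},\mathbf{y})$ in any kernel element to coincide, by forward substitution, with the pair produced by the evaluation procedure. Nothing further is needed.
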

\begin{proposition}[Solution operator]\label{prop:solutionmapping}
Let $(T_A,S_A)$ be a frugal parametrized resolvent splitting for Problem~\ref{eq:monotoneinclusion}. Then, for all $\barbf{z}\in\Fix{T_A}$ and $\barbf{x} = J_{\delta A}(\barbf{y})$, we have
\begin{equation}\label{eq:solutionmapping}
S_A(\barbf{z}) = \frac{1}{n} \sum_{i=1}^n (\bar{y}_i -\delta_i \bar{a}_i)= \bar{x}_1 = \cdots = \bar{x}_n,
\end{equation}
where $\barbf{a} =A(\barbf{x})$.
\end{proposition}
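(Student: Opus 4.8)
The plan is to exploit the fact that in a frugal parametrized resolvent splitting the coefficient data $\delta$, $Y_z$, $Y_x$, $T_z$, $T_x$, $S_z$, $S_x$ are fixed once and for all, while only the resolvents depend on the chosen instance $A\in\mathcal{A}_n$. The starting observation is the elementary identity coming from the resolvent: since $\barbf{x}=J_{\delta A}(\barbf{y})$ and $\barbf{a}=A(\barbf{x})$, one has $\bar y_i = \bar x_i + \delta_i \bar a_i$, whence $\bar y_i - \delta_i \bar a_i = \bar x_i$ for every $i$. Thus the first equality in~\eqref{eq:solutionmapping} reduces to showing $\frac{1}{n}\sum_i \bar x_i = S_A(\barbf{z})$, and the whole statement follows once we establish that $\bar x_1 = \cdots = \bar x_n$ and that their common value equals $S_A(\barbf{z})$.

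First I would record the two instance-independent facts supplied by Lemma~\ref{lemma:M}: from $\barbf{z}\in\Fix T_A$ we obtain a vector $\mathbf{v} = [\barbf{z},\barbf{x},\barbf{y},\barbf{a}]^{T}\in\ker M$ with $\barbf{a}\in A(\barbf{x})$, and the value $s:=S_A(\barbf{z}) = S_z\barbf{z}+S_x\barbf{x}$ is computed purely from $(\barbf{z},\barbf{x})$. The key device is then to re-use the same kernel vector for a \emph{different} instance: for any $n$-tuple of positive reals $\rho=(\rho_1,\dots,\rho_n)$, define the affine operators $\hat A_i:\Hilbert\setto\Hilbert$ by $\hat A_i(x) := \bar a_i + \rho_i (x-\bar x_i)$. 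Each $\hat A_i$ is maximally monotone (a strongly monotone affine operator with full domain) and passes through the graph point $(\bar x_i,\bar a_i)$, so $\barbf{a}\in\hat A(\barbf{x})$. Since $M$ does not depend on the instance, $\mathbf{v}\in\ker M$ still holds, and the converse direction of Lemma~\ref{lemma:M} yields $\barbf{z}\in\Fix T_{\hat A}$ together with $S_{\hat A}(\barbf{z}) = S_z\barbf{z}+S_x\barbf{x} = s$, the very same point as before.

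Now I would invoke the defining property of a fixed point encoding for the instance $\hat A$: from $\barbf{z}\in\Fix T_{\hat A}$ it follows that $s\in\zer\bigl(\sum_i\hat A_i\bigr)$, that is, $\sum_{i=1}^n\bigl(\bar a_i + \rho_i(s-\bar x_i)\bigr)=0$. Crucially, this identity must hold simultaneously for \emph{every} $\rho\in(0,\infty)^n$ while $s$, $\barbf{x}$ and $\barbf{a}$ remain fixed. Hence the map $\rho\mapsto \sum_i\bar a_i + \sum_i\rho_i(s-\bar x_i)$ is affine in $\rho$ and vanishes on the open orthant, so it is identically zero: its constant term gives $\sum_i\bar a_i=0$ and its linear coefficients give $s=\bar x_i$ for every $i$. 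This simultaneously proves $\bar x_1=\cdots=\bar x_n=s=S_A(\barbf{z})$ and, combined with the opening identity, $S_A(\barbf{z})=\frac{1}{n}\sum_i(\bar y_i-\delta_i\bar a_i)$, as required.

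The step I expect to demand the most care is the legitimacy of this operator-replacement argument: one must be sure that substituting $A$ by $\hat A$ genuinely keeps $\barbf{z}$ a fixed point \emph{with the identical computed data} $(\barbf{x},\barbf{y})$, which is exactly what the instance-independence of $M$ together with $\barbf{a}\in\hat A(\barbf{x})$ delivers through Lemma~\ref{lemma:M}; the strictly lower-triangular structure of $Y_x$ encoded in $M$ is what makes the sequential resolvent evaluations reproduce $\barbf{x}$ unchanged. Everything after that is the elementary \emph{vary-the-slopes} computation, which converts the freedom in choosing the instance into the desired pointwise equalities.
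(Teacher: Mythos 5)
Your proposal is correct and follows essentially the same strategy as the paper: both re-use the kernel vector $\mathbf{v}=[\barbf{z},\barbf{x},\barbf{y},\barbf{a}]^{T}$ of $M$ for perturbed instances whose graphs pass through the points $(\bar{x}_i,\bar{a}_i)$, then invoke the fixed point encoding property of $S$ to force $\sum_i\bar{a}_i=0$ and $S_A(\barbf{z})=\bar{x}_j$ for each $j$. The only (cosmetic) difference is that the paper tests the $n+1$ discrete instances $A^{(0)},A^{(1)},\ldots,A^{(n)}$ (the cases $\rho=0$ and $\rho=e_j$ of your family), whereas you vary $\rho$ over the whole open orthant and read off the coefficients of the resulting affine identity.
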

\begin{proof}
Consider a particular instance of Problem~\ref{eq:monotoneinclusion} given by some operators $A\in\mathcal{A}_n$. Let $T_A$ and $S_A$ be the fixed point and solution operators of this particular instance, respectively. Let $\barbf{z}\in\Fix{T_A}$ and $x^* = S_A(\barbf{z})$. By Lemma~\ref{lemma:M}, there exists $\mathbf{v} :=  \left[ \mathbf{z}, \mathbf{x}, \mathbf{y}, \mathbf{a} \right]^{T}\in\ker{M}$ with $\barbf{a}\in A(\barbf{x})$ and $x^* = S_A(\barbf{z}) = S_z \barbf{z} + S_z \barbf{x}$.

Consider now the $n+1$ instances of Problem~\ref{eq:monotoneinclusion} given by the $n$-tuples of maximally monotone operators $A^{(0)}, A^{(1)},\ldots, A^{(n)}\in\mathcal{A}_n$ defined as
\begin{equation*}
A^{(0)}(\mathbf{x}) := \barbf{a} \text{ and } A^{(j)}(\mathbf{x}) := \barbf{a} + \left[
\begin{matrix}
0 \\ \vdots \\ x_j - \bar{x}_j \\ \vdots \\ 0
\end{matrix}
\right]
\quad
\forall j \in{\llbracket 1,n\rrbracket}.
\end{equation*}
Since $\mathbf{v}\in\ker M$ and $\barbf{a} = A^{(j)}(\barbf{x})$, for all $j \in{\llbracket 0,n\rrbracket}$, Lemma~\ref{lemma:M} implies that $\barbf{z} \in \Fix{T_{A^{(j)}}}$, $\barbf{x} = J_{\delta A^{(j)}}(\barbf{y})$ and thus, $S_{A^{(j)}}(\barbf{z})= S_z \barbf{z} + S_x\barbf{x}=x^*$ is a solution to every  instance. Therefore, we have $0 = \sum_{i=1}^n A_i^{(0)}(x^*) = \sum_{i=}^n \bar{a}_i$ and hence
\begin{equation*}
0 = \sum_{i=1}^n A^{(j)}_i(x^*) = \sum_{i=1}^n \bar{a}_i + x^* -\bar{x}_j = x^* -\bar{x}_j  \quad \forall j \in{\llbracket 1,n\rrbracket},
\end{equation*}
from where it follows that $x^* = \bar{x}_1 = \cdots = \bar{x}_n$. Finally, since $\barbf{x} = J_{\delta A^{(0)}} (\barbf{y})$, we have that $\barbf{y}-\barbf{x} = \delta A^{(0)}(\barbf{x})=(\delta_1\bar{a}_1,\ldots,\delta_n\bar{a}_n)$. Consequently, $\sum_{i=1}^n \bar{y}_i - n x^* = \sum_{i=1}^n \delta_i \bar{a}_i$, which completes the proof.
\end{proof}
Note that, although the expression for the solution operator given by~\eqref{eq:solutionmapping} differs from the one obtained in~\cite[Proposition~3.2]{malitsky2021resolvent}, it still holds that the vector $\barbf{x}$ belongs to the diagonal subspace of dimension $n$, which we denote by $\Delta_n$. This is what we employ to prove the following theorem.
\begin{theorem}\label{th:minimalliftingappendix}
Let $(T_A,S_A)$ be a frugal parametrized resolvent splitting with $d$-fold lifting for Problem~\ref{eq:monotoneinclusion}. Then $d \geq n-1$.
\end{theorem}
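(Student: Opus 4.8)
The plan is to prove the bound directly (no contradiction is needed) by feeding the encoding a family of instances rich enough to force a rank-$(n-1)$ constraint on the coefficient matrices, and then reading off $d\geq n-1$ from the fact that the state variable $\barbf{z}$ lives in $\Hilbert^{d}$. First I would restrict to the scalar case $\Hilbert=\R$ and use the affine instances $A_i(x)=x-b_i$, parametrized by $\mathbf{b}=(b_1,\dots,b_n)\in\R^n$. Each such $A_i$ is maximally monotone, and $\sum_i A_i$ has the \emph{unique} zero $x^{\ast}=\tfrac1n\,\mathbf{1}^{\top}\mathbf{b}$, so every instance is solvable; the fixed-point encoding then guarantees $\Fix T_A\neq\emptyset$. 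Fixing any $\barbf{z}\in\Fix T_A$, Lemma~\ref{lemma:M} produces a vector $(\barbf{z},\barbf{x},\barbf{y},\barbf{a})\in\ker M$ with $\barbf{a}\in A(\barbf{x})$, and Proposition~\ref{prop:solutionmapping} forces $\barbf{x}\in\Delta_n$, i.e.\ $\bar{x}_1=\cdots=\bar{x}_n=S_A(\barbf{z})$. Since the solution is unique, $S_A(\barbf{z})=x^{\ast}$, and single-valuedness of $A_i$ gives $\bar{a}_i=x^{\ast}-b_i$.

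The key step is to combine the two block equations carried inside $\ker M$. The resolvent block $\barbf{y}=\barbf{x}+\operatorname{diag}(\delta)\barbf{a}$ gives $\bar{y}_i=(1+\delta_i)x^{\ast}-\delta_i b_i$, while the combination block $\barbf{y}=Y_z\barbf{z}+Y_x\barbf{x}$, together with $\barbf{x}=x^{\ast}\mathbf{1}$ and the strict lower-triangularity of $Y_x$, gives $\bar{y}_i=(Y_z\barbf{z})_i+x^{\ast}\sum_{j<i}(Y_x)_{ij}$. Eliminating $\barbf{y}$ and substituting $x^{\ast}=\tfrac1n\,\mathbf{1}^{\top}\mathbf{b}$ collapses everything into a single linear identity of the form
\begin{equation*}
Y_z\,\barbf{z}=R\,\mathbf{b},\qquad R:=\tfrac1n\,e\,\mathbf{1}^{\top}-\operatorname{diag}(\delta),
\end{equation*}
where $e\in\R^n$ is an explicit vector assembled from $\delta$ and the entries of $Y_x$.

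Because this identity must be solvable in $\barbf{z}$ for \emph{every} $\mathbf{b}\in\R^n$, I would conclude $\operatorname{range}(R)\subseteq\operatorname{range}(Y_z)$, whence $\rank R\le\rank Y_z\le d$ (the last inequality since $Y_z$ has only $d$ columns). To finish, note that $R$ is a rank-one perturbation of $-\operatorname{diag}(\delta)$, which is invertible because every $\delta_i>0$; hence $\rank R\ge n-1$, and combining the two bounds yields $d\geq n-1$. This is precisely the place where the extension to the \emph{parametrized} setting costs nothing: carrying $\operatorname{diag}(\delta)$ in place of $\Id$ leaves both the elimination and the rank estimate intact, which is why the Malitsky--Tam conclusion survives.

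The main obstacle is the choice of test instances. Degenerate choices such as constant operators $A_i\equiv a_i$ do not suffice: they only constrain things on the hyperplane $\{\sum a_i=0\}$ and rule out nothing beyond $d\leq n-3$. The point is to use instances whose unique solution $x^{\ast}$ genuinely \emph{varies} with the data $\mathbf{b}$, so that Proposition~\ref{prop:solutionmapping} can be activated to pin down $\barbf{a}$ and thereby produce a perturbation matrix of full-minus-one rank. Once this family is selected and $\barbf{x}\in\Delta_n$ is invoked, the remainder is routine linear algebra.
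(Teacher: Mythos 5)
Your proof is correct, and it takes a genuinely different route from the paper's. The paper argues by contradiction: assuming $d\le n-2$, it applies the rank--nullity theorem to $T_x\in\R^{d\times n}$ to produce $\barbf{x}\notin\Delta_n$ with $T_x\barbf{x}=T_x\mathbf{x}$, then manufactures a constant-operator instance $\bar A(\mathbf{s}):=\barbf{a}$ for which $[\barbf{z},\barbf{x},\barbf{y},\barbf{a}]^{T}\in\ker M$, so that Lemma~\ref{lemma:M} and Proposition~\ref{prop:solutionmapping} force $\barbf{x}\in\Delta_n$ --- a contradiction. You instead argue directly: feeding the scheme the affine family $A_i=\Id-b_i$, whose unique zero $x^{\ast}=\tfrac1n\sum_i b_i$ varies with the data, you eliminate $\barbf{y}$ between the resolvent block and $\barbf{y}=Y_z\barbf{z}+Y_x\barbf{x}$ to obtain $Y_z\barbf{z}=R\mathbf{b}$ with $R=\tfrac1n e\mathbf{1}^{\top}-\operatorname{diag}(\delta)$, and conclude $d\ge\rank Y_z\ge\rank R\ge n-1$ because $R$ is a rank-one perturbation of an invertible diagonal matrix. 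I checked the elimination ($\bar y_i=(1+\delta_i)x^{\ast}-\delta_i b_i$ versus $\bar y_i=(Y_z\barbf{z})_i+x^{\ast}\sum_{j<i}(Y_x)_{ij}$) and the range inclusion $\ran R\subseteq\ran Y_z$, which indeed follows from solvability for every $\mathbf{b}$; all steps hold. Your version locates the obstruction in $Y_z$ (the state must carry at least $n-1$ independent directions to encode the data through the solution map) rather than in $T_x$, and it makes transparent why the parameters $\delta_i>0$ cost nothing, since $\operatorname{diag}(\delta)$ remains invertible; the paper's version stays closer to Malitsky--Tam and, amusingly, does use constant operators --- but only to realize the non-diagonal $\barbf{x}$ as a genuine resolvent output, not as the test family, so your closing remark does not conflict with it. Two minor points: the restriction to $\Hilbert=\R$ is an informality, since the problem is posed over a fixed (possibly larger) $\Hilbert$; the same computation works verbatim with $b_i\in\Hilbert$, the matrices acting via Kronecker products, and the range argument reduces to the scalar one along any fixed direction. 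And, like the paper, you implicitly use that $Y_z$, $Y_x$ and $\delta$ are the same for every instance in the family, which is built into the notion of a splitting scheme.
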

\begin{proof}
Suppose, by contradiction, that $(T_A,S_A)$ is a frugal parametrized resolvent splitting for Problem~\ref{eq:monotoneinclusion} with $d$-fold lifting such that $d \leq n-2$. Consider a particular instance of the problem given by $A\in\mathcal{A}_n$ such that $\zer{\left(\sum_{i=1}^n A_i\right)} \neq \emptyset$ and take $\mathbf{z} \in\Fix T_A$. By Lemma~\ref{lemma:M}, there exists $\mathbf{v}:= \left[ \mathbf{z}, \mathbf{x}, \mathbf{y}, \mathbf{a} \right]^{T}\in \ker M$ with $\mathbf{a} \in A(\mathbf{x})$. The last row of $M$ implies that $0 = (T_z - \Id)\mathbf{z} + T_x\mathbf{x}$. Since $T_x\in\mathbb{R}^{d\times n}$ and $d\leq n-2$, by the rank-nullity theorem, $\dim \ker T_x = n - \dim\rank T_x \geq n - d \geq 2$. Since $\Delta_n $ is a subspace of dimension 1, there exists $\barbf{x}\notin\Delta_n$ such that $T_x \mathbf{x} = T_x\barbf{x}$.

Now, set $\barbf{z} := \mathbf{z}$, $\barbf{y} := Y_z \barbf{z} + Y_x \barbf{x}$ and $\barbf{a} := ((\bar{y}_1-\bar{x}_1)/\delta_1,\ldots,(\bar{y}_n -\bar{x}_n)/\delta_n)$ and consider the instance of the problem given by $\bar{A}\in\mathcal{A}_n$ defined as $\bar{A}(\mathbf{s}) := \barbf{a}$ for all $\mathbf{s}\in\Hilbert^{n}$. Then, $\barbf{v} := \left[ \barbf{z}, \barbf{x}, \barbf{y}, \barbf{a}\right]^{T}\in\ker M$ with $\barbf{a} =\bar{A}(\barbf{x})$. By Lemma~\ref{lemma:M} and Proposition~\ref{prop:solutionmapping}, this implies that $\barbf{x} \in \Delta_{n}$, obtaining thus a contradiction which completes the proof.
\end{proof}
\end{document}